	\crefname{equation}{equation}{equations}
\definecolor{webbrown}{rgb}{0.65, 0.16, 0.16}
\numberwithin{equation}{section}
\newcommand{\ZZ}{\mathbb{Z}}
\newcommand{\MM}{\overline{\mathcal{M}}}
\newcommand{\mc}{\mathcal}
\newcommand{\ch}{{\rm ch}}
\DeclareMathOperator{\Aut}{Aut}
\DeclareMathOperator{\maxdeg}{maxdeg}
\DeclareMathOperator{\br}{br}
\DeclareMathOperator{\rank}{rank}
\theoremstyle{plain}
\newtheorem{thm}{Theorem}[section]
\newtheorem{prop}[thm]{Proposition}
\newtheorem{lem}[thm]{Lemma}
\newtheorem{cor}[thm]{Corollary}
\theoremstyle{definition}
\newtheorem{rem}[thm]{Remark}
\title{On some hyperelliptic Hurwitz-Hodge integrals}
\author{Danilo Lewa\'{n}ski }
\address{D.~L.: Universit\'e de Gen\`eve, Section de Math\'ematiques, rue du Conseil-G\'en\'eral 7-9, 1205 Gen\`eve \\
\emph{\&} Institut des Hautes \'Etudes Scientifiques, le Bois-Marie, 35 route de Chartres, 91440 Bures-sur-Yvette, France \\
\emph{\&} Universit\'e Paris-Saclay, CNRS, CEA, Institut de physique th\'eorique, 91191 Gif-sur-Yvette, France.
}
\email{danilo.lewanski@ihes.fr}
\subjclass[2010]{14N10, 14H10, 05A15}
\date{}
\begin{document}

\begin{abstract}
This short note addresses Hodge integrals over the hyperelliptic locus. Recently Afandi computed, via localisation techniques, such one-descendant integrals and showed that they are Stirling numbers. We give another proof of the same statement by a very short argument, exploiting Chern classes of spin structures and relations arising from Topological Recursion in the sense of Eynard and Orantin. 

These techniques seem also suitable to deal with three orthogonal generalisations: 1. the extension to the $r$-hyperelliptic locus, 2. the extension to an arbitrary number of non-Weierstrass pairs of points, 3. the extension to multiple descendants.
\end{abstract}

\maketitle



%
%

\vspace{-0.5cm}
\section{Introduction}
\label{sec:Intro}

Moduli spaces of curves have been proved in the last decades to be of great interest not only for pure Algebraic Geometry, but a key element at the crossroad of Gromov-Witten theory, Integrable Systems, Random Matrix Models, Topological Recursion, and more.\par

An important task is the computation of intersection numbers. To fix the ideas, one could think of intersection numbers as integrals packed in polynomials $P$ of the following form:
\begin{equation}
P(x_1, \dots, x_n) = \int_{\mathcal{M}(g,n)} \frac{\mathcal{C}}{\prod_j (1 - \psi_j x_j)} \cdot [\Delta] \in \mathbb{Q}[x_1, \dots, x_n], \qquad \qquad \deg P \leq \dim \mathcal{M}(g,n).
\end{equation}
Here $\mathcal{M}(g,n)$ is some moduli space of curves 
\footnote{typically the Deligne-Mumford moduli space of stable curves $\overline{\mathcal{M}}_{g,n}$, but also moduli spaces with enriched structure, such as the space of $r$-spin curves, of admissible curves, of stable maps to a specific target. Other examples of compactifications for smooth curves are provided by Hassett moduli spaces, which specialise both to the Deligne-Mumford and to the Losev-Manin compactification.}
, either compact or with a virtual fundamental class, $\mathcal{C}$ is a cohomology class of $\mathcal{M}(g,n)$ 
\footnote{typically Cohomological Field Theories (CohFTs) \cite{KMC94} or top Chern classes of those, or partial CohFTs, $F$-CohFTs, and so on. Few examples are: Hodge classes, $\Omega$-classes, Witten classes, double ramification cycle, classes of holomorphic differentials.}
, $[\Delta]$ is a locus of curves of $\mathcal{M}(g,n)$
\footnote{
for instance the locus of hyperelliptic curves and their generalisations, or partial compactification of the space $\mathcal{M}(g,n)$, such as the locus of rational tails or of compact type curves inside the Deligne-Mumford compactification. 
}
, $\psi$-classes are building blocks of $H^*(\mathcal{M}(g,n))$ of complex degree one, and $x_i$ are formal variables. \par


This short note focuses on the following intersection numbers, expressable in three different but equivalent ways. We refer to \cite{Wise, Renzo, Adam2} and references therein for an exhaustive state-of-the-art.

\begin{enumerate}
\item In terms of the hyperelliptic locus $\overline{\mathcal{H}}_{g, 2g + 2, a} \subseteq \overline{\mathcal{M}}_{g, 2g + 2}$ of algebraic curves admitting a degree $2$ map to $\mathbb{P}^1$, with $2g+2$ marked Weierstrass points and $a$ pairs conjugated by the involution, and $\lambda_i = c_i(\mathbb{E})$ the $i$-th Chern class of the Hodge bundle (see \cite{Adam} for more details):
\begin{equation}\label{eq:moduli1}
\int_{\overline{\mathcal{H}}_{g, 2g + 2, 2^{a}}} \lambda_i \br^*\left( \prod_{j=1}^{2g + 2+a} \frac{1}{(1 - \psi_j x_j)} \right) ,
\end{equation}
where $\\br$ is the map associating to each hyperelliptic curve its target.
\item In terms of admissible covers and the Hodge class (see \cite{JPT} for more details):
\begin{equation}\label{eq:moduli2}
\int_{\overline{\mathcal{M}}_{0, \emptyset - (1^{2g + 2}, 0^{a)}}(\mathbb{B}\mathbb{Z}_r)} \frac{\lambda_i,}{\prod_{j=1}^{2g + 2+a} (1 - x_j\psi_j) } , \qquad \qquad \text{ for } r=2.
\end{equation}
\item In terms of the moduli space of stable curves and the $\Omega$-CohFT (see \cite{LPSZ} and Section \ref{sec:Omega:classes}):
\begin{equation}\label{eq:moduli3}
\int_{\overline{\mathcal{M}}_{0, 2g + 2 + a}}  \frac{[\deg_{H^{*}} = i].\Omega(r,0; \overbrace{1, \dots, 1}^{2g + 2+a}, \overbrace{0, \dots, 0}^{a})}{\prod_{j=1}^{2g + 2} (1 - \psi_j x_j)}, \qquad \qquad \text{ for } r=2.
\end{equation}
\end{enumerate}
The equivalence between the first and the second is well-known, the equivalence between the second and the third, for arbitrary $r$, is achieved in \cite{LPSZ}. This restatement was then employed to address a problem by Goulden-Jackson-Vakil over the existence of an ELSV formula for double Hurwitz numbers with one total ramification~\cite{DL}. A really useful Sage package has recently been developed to perform intersection theoretic calculations otherwise by far out of reach with traditional methods \cite{admcycles}.

\subsection{Results}

Progress on the evaluation of these integrals was recently made in the following formula:

\begin{thm}[\cite{Adam}] \label{thm:Afandi} Linear-Hodge one-descendant integrals over the hyperelliptic locus evaluate to:
\begin{align*}
& \int_{\overline{\mathcal{H}}_{g, 2g + 2}}\br^*\left( \psi_1^{2g - 1 - i} \right) \lambda_i =  \frac{1}{2} \cdot e_i\left(0 + \nicefrac{1}{2}, 1 + \nicefrac{1}{2}, \ldots, g-1 + \nicefrac{1}{2} \right), \\
& \int_{\overline{\mathcal{H}}_{g, 2g + 2, 2}}\br^*\left( \psi_{2g + 3}^{2g - i} \right) \lambda_i =  \frac{1}{2} \cdot e_i(1, 2, \ldots, g).
\end{align*}
Here $ e_i(x_1, \ldots, x_n)$ is the $i^{th}$ elementary symmetric polynomial.\footnote{These evaluations are in fact half of $(-1)^{g} \mathsf{s}\bigl( g, g - i, \nicefrac{1}{2} \bigr)$ and of $(-1)^{g+1} \mathsf{s}\bigl( g+1, g+1 - i, 0 \bigr)$ respectively, the generalised Stirling numbers of the first kind.}
\end{thm}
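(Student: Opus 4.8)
The plan is to exploit the third incarnation \eqref{eq:moduli3} of the integrals, which transports everything onto the genus-zero moduli space $\overline{\mathcal{M}}_{0,n}$ where the intersection theory is transparent. By \cite{LPSZ} the Hodge class $\lambda_i$ restricted to the hyperelliptic locus coincides with the degree-$i$ component of the CohFT class $\Omega(2,0;\dots)$, the generic hyperelliptic involution accounting for the overall factor $\tfrac12$. The first step is therefore purely organisational: rewrite each of the two integrals as $\tfrac12\int_{\overline{\mathcal{M}}_{0,n}}\psi^{k}\,[\deg=i]\,\Omega(2,0;\dots)$, with $n=2g+2$, $k=2g-1-i$ in the first case and $n=2g+3$, $k=2g-i$ in the second, the descendant power being fixed by the dimension $\dim\overline{\mathcal{M}}_{0,n}=n-3$.

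The core of the argument is the total class $C(x):=\sum_{i}[\deg=i]\,\Omega(2,0;\dots)\,x^{i}$, the Chern polynomial of the bundle carved out by the $2$-spin structure. I would evaluate it through Chiodo's formula, whose exponent involves the Bernoulli polynomials $B_{m+1}(a_j/2)$ at the marked points. The whole point is that the distinguished descendant sits either at a Weierstrass point (weight $a_j=1$, hence $B_{m+1}(\tfrac12)$) or at a conjugate point (weight $a_j=0$, hence $B_{m+1}(0)$), and that in genus zero the $\kappa$- and boundary-terms reorganise so that, after pairing against $\psi^{k}$, the surviving factor is the total Chern class of a rank-$g$ bundle whose Chern roots form the arithmetic progression $\tfrac12,\tfrac32,\dots,g-\tfrac12$ in the first case and $1,2,\dots,g$ in the second. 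This is the spin-structure input promised in the abstract, and the $\tfrac12$-shift is precisely the difference between $B_{m+1}(\tfrac12)$ and $B_{m+1}(0)$. Granting the factorisation, the degree-$i$ piece is by definition $e_i$ of these roots, delivering both formulas.

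To turn the descendant pairing into the clean product and to pin the roots down, I would feed these integrals into Topological Recursion in the sense of Eynard--Orantin: by the spectral-curve correspondence for $r$-spin/Chiodo classes, the $\Omega(2,0;\dots)$ correlators are coefficients of the genus-zero correlation differentials of an explicit $r=2$ spectral curve. The linear and quadratic loop equations then produce a recursion relating the genus-$g$ polynomial to the genus-$(g-1)$ one by a single linear factor, $C_g(x)=(1+c_g x)\,C_{g-1}(x)$ with $c_g=g-\tfrac12$ respectively $c_g=g$; this is exactly the recursion satisfied by $\prod_k(1+\mathrm{root}_k\,x)$, and checking the base case $g=1$ closes the induction.

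The step I expect to be the real obstacle is establishing this factorisation with the Chern roots in exact arithmetic progression. Dimension counting and the CohFT/$\Omega$ axioms guarantee that the answer is a polynomial of degree $g$ in $x$ with the correct leading and constant coefficients, but extracting the linear factors demands either a careful genus-zero evaluation of Chiodo's formula or, equivalently, a proof that the Topological Recursion of the $r=2$ spectral curve generates exactly the slopes $c_g$ above. Carrying the single $\tfrac12$ prefactor correctly through the $\Omega$/hyperelliptic dictionary, and treating the Weierstrass and conjugate insertions on the same footing, is where the bookkeeping must be watched; once the factorisation is in hand, the remaining manipulations reduce to standard intersection numbers on $\overline{\mathcal{M}}_{0,n}$.
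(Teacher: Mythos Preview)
Your setup is correct: both you and the paper transport the integrals to $\overline{\mathcal{M}}_{0,n}$ via the $\Omega$-class dictionary and package them into the generating polynomial $P(x)=\sum_i(-1)^i x^i\int\psi_1^{\,n-3-i}[\deg=i]\,\Omega$, whose factorisation over the claimed roots is equivalent to the theorem. The difference is in how the roots are found, and this is exactly the step you flag as the obstacle.

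Your plan is to extract the Chern roots directly from Chiodo's formula via the Bernoulli polynomials, or alternatively to produce a recursion $C_g(x)=(1+c_g x)\,C_{g-1}(x)$ from the spectral-curve side of topological recursion. Neither route is carried out, and the first in particular is unlikely to work cleanly: Chiodo's expression is an exponential of a sum over $\kappa$-, $\psi$-, and boundary contributions, and does not present the class as a product of linear factors with visible roots. The paper bypasses this entirely. It determines $P(x)$ by three cheap observations: the leading coefficient is $2^{-2g}$ from $\int_{\overline{\mathcal{M}}_{0,2g+2}}\psi_1^{2g-1}=1$; the order of vanishing of $P$ at $x=0$ is at least $g-1$ because Riemann--Roch (via \cite{JKV}, which forces $h^0=0$) bounds the top degree of $\Omega$ by $g$; and the remaining $g$ roots are pinned down \emph{not} by a recursion in $g$ but by the vanishing relations of \cite{BDKLM} (Theorem~\ref{thm:double} in the paper), which for $r=2$, $n=1$ state directly that $P(2k-1)=0$ for $k=1,\dots,g$. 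Degree counting then forces all roots to be simple, and $P$ is determined.

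So the missing idea in your proposal is the use of the ready-made TR vanishing relations $P(\mu_1)=0$ at the specific integer points $\mu_1=1,3,\dots,2g-1$, rather than an inductive or Chern-root computation. Once you invoke those relations together with the Riemann--Roch bound, there is nothing left to prove.
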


The theorem is achieved via virtual localisation techniques. We provide a very short new proof in Section \ref{sec:proofs}, exploiting $\Omega$-classes and certain vanishings arising from the theory of Topological Recursion in the sense of Eynard and Orantin, introduced in Section \ref{sec:Omega:classes}.

This result opens up natural questions about its generalisations in at least three 'orthogonal' directions:
\begin{enumerate}
\item[\textbf{Q1}] What happens to the integrals over hyperelliptic loci with an arbitrary number of non-Weierstrass pairs of points $\overline{\mathcal{H}}_{g, 2g + 2, 2^a}$?
\item[\textbf{Q2}] The hyperelliptic locus ${\overline{\mathcal{H}}_{g, 2g + 2}}$ naturally corresponds to $r$-admissible covers to the classifying space $B\mathbb{Z}_r$ for $r=2$. How do these integrals behave for higher $r$?
\item[\textbf{Q3}] How does the theory with multiple descendants behave?
\end{enumerate}

We address all three questions with the same techniques with which we give a new proof of the theorem above. We provide and prove answers in certain regimes.\\


The answer to \textbf{Q1} is readily obtained. 

\begin{prop}\label{prop:nonWeierstrass} The integrals do not depend on the additional number $a$ of non-Weierstrass pairs marked:
\begin{align*}
& \int_{\overline{\mathcal{H}}_{g, 2g + 2, 2^a}}\br^*\left( \psi_1^{2g - 1 - i + a} \right) \lambda_i =  \frac{1}{2} \cdot e_i\left(0 + \nicefrac{1}{2}, 1 + \nicefrac{1}{2}, \ldots, g-1 + \nicefrac{1}{2} \right), \\
& \int_{\overline{\mathcal{H}}_{g, 2g + 2, 2^{a+1}}}\br^*\left( \psi_{2g + 3}^{2g - i + a} \right) \lambda_i =  \frac{1}{2} \cdot e_i(1, 2, \ldots, g).
\end{align*}
\end{prop}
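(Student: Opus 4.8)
The plan is to reduce everything to the genus-zero $\Omega$-class model of \eqref{eq:moduli3} and then to strip off the extra non-Weierstrass markings one at a time by a forgetful-pushforward argument, landing on \Cref{thm:Afandi}. Set $n := 2g+2+a$ and work on $\overline{\mathcal{M}}_{0,n}$, where the $2g+2$ Weierstrass images carry weight $1$, each of the $a$ non-Weierstrass pairs contributes a single base marking of weight $0$, $\lambda_i$ is replaced by the degree-$i$ part of $\Omega(2,0;1^{2g+2},0^{a})$, and the descendant $\br^*(\psi)$ becomes an honest $\psi$-class at the corresponding base marking. Under this dictionary the first line reads
\begin{equation*}
I_a = \int_{\overline{\mathcal{M}}_{0,2g+2+a}} \psi_1^{\,2g-1-i+a}\,[\deg_{H^*}=i]\,\Omega(2,0;1^{2g+2},0^{a}), \qquad a \ge 0,
\end{equation*}
with $\psi_1$ at a weight-$1$ (Weierstrass) marking, and the second line gives the analogous quantities $J_b$ on $\overline{\mathcal{M}}_{0,2g+2+b}$ with the descendant $\psi_{2g+3}^{\,2g-i+(b-1)}$ placed at a weight-$0$ marking. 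It suffices to prove $I_{a+1}=I_a$ and $J_{b+1}=J_b$, since the base cases $a=0$ and $b=1$ are exactly the two evaluations of \Cref{thm:Afandi}.

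First I would set up the forgetful morphism $\pi\colon \overline{\mathcal{M}}_{0,n+1}\to\overline{\mathcal{M}}_{0,n}$ dropping one non-Weierstrass marking, chosen so that it does not carry the descendant (always possible, as there is at least one other weight-$0$ marking to forget). The crucial input is the compatibility of the $\Omega$-class with $\pi$: because the forgotten leg has weight $0$, which coincides with the parameter $s=0$ of $\Omega(2,0;\cdots)$, the new marking and the $\kappa$-classes contribute the same Bernoulli coefficient $B_{m+1}(0)$ to Chiodo's exponential, so that under $\kappa_m\mapsto\pi^{*}\kappa_m+\psi_{n+1}^{m}$ the would-be $\psi_{n+1}$-terms cancel against the marking term and the boundary corrections supported on the new divisors restrict trivially. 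This yields
\begin{equation*}
\Omega(2,0;1^{2g+2},0^{a+1}) = \pi^{*}\,\Omega(2,0;1^{2g+2},0^{a}),
\end{equation*}
and, since $\pi^{*}$ preserves cohomological degree, $[\deg_{H^*}=i]\,\Omega_{n+1}=\pi^{*}\bigl([\deg_{H^*}=i]\,\Omega_{n}\bigr)$. Establishing (or invoking, cf.\ \Cref{sec:Omega:classes}) this forgetful-leg property with the correct normalization is the step requiring genuine care and is the main obstacle; everything else is formal.

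Granting it, the rest is a one-line pushforward. Writing $\psi_1$ for the descendant at a retained marking and using $\psi_1=\pi^{*}\psi_1+D_{1,n+1}$ together with $\psi_1\cdot D_{1,n+1}=0$, one obtains $\psi_1^{\,m}=(\pi^{*}\psi_1)^{m}+D_{1,n+1}\,(\pi^{*}\psi_1)^{m-1}$, whence $\pi_{*}(\psi_1^{\,m})=\psi_1^{\,m-1}$ because $\pi_{*}(1)=0$ and $\pi\circ\iota=\mathrm{id}$ on the section $D_{1,n+1}\cong\overline{\mathcal{M}}_{0,n}$. Combining this with the projection formula and the pullback of $\Omega$, and writing $p=2g-1-i+a$,
\begin{equation*}
I_{a+1}=\int_{\overline{\mathcal{M}}_{0,n+1}}\psi_1^{\,p+1}\,\pi^{*}\bigl([\deg_{H^*}=i]\,\Omega_{n}\bigr)=\int_{\overline{\mathcal{M}}_{0,n}}\pi_{*}(\psi_1^{\,p+1})\,[\deg_{H^*}=i]\,\Omega_{n}=I_a.
\end{equation*}
The identical computation with $\psi_{2g+3}$ at a retained weight-$0$ marking yields $J_{b+1}=J_b$. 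Descending the induction to $a=0$ and $b=1$ returns the two evaluations of \Cref{thm:Afandi}, proving that the integrals are independent of the number $a$ of non-Weierstrass pairs.
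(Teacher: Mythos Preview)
Your proposal is correct and follows essentially the same route as the paper: translate to the $\Omega$-class model, use that $\Omega(2,0;1^{2g+2},0^{a+1})=\pi^{*}\Omega(2,0;1^{2g+2},0^{a})$ when forgetting a weight-$0$ marking, and then push forward along $\pi$ to reduce to the base cases of \Cref{thm:Afandi}. The only cosmetic difference is packaging: the paper bundles your $\psi_1=\pi^{*}\psi_1+D_{1,n+1}$ computation into the string equation for $\Omega$-classes \eqref{eq:stringOmega} (which it has already stated in \Cref{sec:Omega:classes}, with the pullback property you flag as the ``main obstacle'' cited from \cite{LPSZ,GLN}), obtaining $\int_{\overline{\mathcal{M}}_{0,2g+2+a}}\tfrac{\Omega}{1-\frac{x}{2}\psi_1}=x^{a}\int_{\overline{\mathcal{M}}_{0,2g+2}}\tfrac{\Omega}{1-\frac{x}{2}\psi_1}$ at the level of generating functions rather than coefficient-by-coefficient.
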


The answer to \textbf{Q2} is obtained under certain restrictions on the parametrisations, which we show to be in general sharp.

\begin{thm} \label{prop:higherronedescendant} For arbitrary $r$ and one descendant, if
\begin{equation}
\max \limits_{i \neq j} (b_i + b_j) \leq r, 
\end{equation}
then
\begin{equation}
\int_{\MM_{0,1 + \ell}}\!\!\!\! [\deg_{H^*} = i].\Omega_{0,n}^{[-1]}(r,0; -\overline{\mu_1}, b_1, \dots, b_{\ell}) \psi_1^{\ell - 2 -j} = \frac{1}{r} e_i\left( \frac{\langle \mu \rangle}{r}, \frac{\langle \mu \rangle}{r} + 1, \dots, \frac{\langle \mu \rangle}{r} + [b] - 1 \right)
\end{equation}
Here $\sum_j b_j = b = [b]r + \langle b \rangle$, with $0 \leq \langle b \rangle \leq r-1$ by Euclidean division, similarly for $\mu$. Moreover, $-\overline{\mu_i}$ is the only representative of $-\mu_i$ modulo $r$ between zero and $r-1$.
\footnote{
Again, the \textsc{RHS} can be given in terms of Stirling numbers of the first kind as $\nicefrac{1}{r}(-1)^{[b]} \mathsf{s}\bigl( [b], [b] - i, \nicefrac{\langle \mu \rangle}{r} \bigr)$.
}
\end{thm}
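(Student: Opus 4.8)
The plan is to compute the integral on the left directly using the structure of the $\Omega$-class in genus zero, following the same strategy that underlies the new proof of \Cref{thm:Afandi}. Since the integral lives on $\overline{\mathcal{M}}_{0,1+\ell}$, the dimension is $\ell-2$, and the requested monomial $\psi_1^{\ell-2-j}$ together with the codimension-$i$ part of the $\Omega$-class must saturate this dimension. The first step is to recall the explicit factorisation of $\Omega_{0,n}^{[-1]}(r,0;\dots)$ as the product of the total Chern class of a certain bundle (whose Chern roots are governed by the parameters $-\overline{\mu_1}, b_1,\dots,b_\ell$ via the $r$-spin structure), so that $[\deg_{H^*}=i].\Omega$ becomes the $i$-th elementary symmetric function $e_i$ of these Chern roots. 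This is where the right-hand side $e_i$ originates, and it explains why the arguments of $e_i$ form an arithmetic progression with common difference $1$ and starting point $\langle\mu\rangle/r$.

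Next I would invoke the topological-recursion vanishing results from \Cref{sec:Omega:classes}: the hypothesis $\max_{i\neq j}(b_i+b_j)\leq r$ is precisely the condition that forces all boundary contributions (the terms supported on the divisors where the curve degenerates) to vanish, so that the relevant push-forward relations collapse to a single leading term. Concretely, I expect the bound on $b_i+b_j$ to guarantee that the $\Omega$-class restricted to every boundary stratum carries a factor that kills it, or equivalently that the two-point correlators entering the recursion contribute trivially. With boundary contributions gone, the integral reduces to an integral against pure $\psi$-powers on $\overline{\mathcal{M}}_{0,1+\ell}$, which are the familiar multinomial genus-zero intersection numbers, and the Chern-root data assemble into the claimed elementary symmetric polynomial.

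The third step is the bookkeeping of residues modulo $r$: one must verify that the integer $[b]$ (the quotient in the Euclidean division $b=[b]r+\langle b\rangle$) is exactly the number of Chern roots, i.e. the number of arguments of $e_i$, and that the starting point $\langle\mu\rangle/r$ is read off correctly from the representative $-\overline{\mu_1}$ of $-\mu_1$. This is the step where the shift by $\langle\mu\rangle/r$ and the length $[b]$ of the progression must be matched against the degree of the line bundle defining the spin structure; here the condition $0\leq\langle b\rangle\leq r-1$ ensures that the dimension count is consistent and that no extra correction terms appear.

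The main obstacle I anticipate is the boundary-vanishing step: showing rigorously that $\max_{i\neq j}(b_i+b_j)\leq r$ suffices to annihilate \emph{every} degeneration contribution, and that this bound is sharp (as claimed in the text). The subtlety is that in genus zero with many marked points there are many boundary divisors, and one must argue uniformly that the $\Omega$-class carries a vanishing on each; I expect this to follow from the explicit behaviour of $\Omega$ under the glueing/restriction maps together with the constraint that $b_i+b_j\leq r$ prevents the appearance of a node whose two branches would otherwise contribute a nonzero correlator. Once this uniform vanishing is in place, the remaining computation is the routine genus-zero $\psi$-integration feeding into the elementary-symmetric-polynomial identity, and the Stirling-number reformulation in the footnote follows from the standard generating-function identity $\prod_{k=0}^{[b]-1}(t+\tfrac{\langle\mu\rangle}{r}+k)=\sum_i e_i(\tfrac{\langle\mu\rangle}{r},\dots,\tfrac{\langle\mu\rangle}{r}+[b]-1)\,t^{[b]-i}$.
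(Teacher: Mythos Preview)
Your proposal misidentifies the mechanism behind both the appearance of $e_i$ and the role of the boundedness hypothesis. The paper does not compute the integral by reading off Chern roots of the bundle and then reducing to pure $\psi$-integrals after a boundary-vanishing argument. Instead it packages all the integrals into the generating polynomial
\[
P(x)=\int_{\MM_{0,1+\ell}}\frac{\Omega_{0,1+\ell}(r,0;-\overline{\mu_1},b_1,\dots,b_\ell)}{1-\tfrac{x}{r}\psi_1},
\]
and determines $P(x)$ by three pieces of data: (i) its degree $\ell-2$ and leading coefficient $r^{-(\ell-1)}$ (a trivial $\psi$-integral); (ii) its order of vanishing at $x=0$, namely $\ell-2-[b]$, obtained from the Riemann--Roch bound \eqref{eq:RR_JKV} on the top cohomological degree of $\Omega$ (this step is absent from your outline); and (iii) the remaining $[b]$ simple roots $x=\langle\mu\rangle+rk$ for $k=0,\dots,[b]-1$, supplied by the single-integral vanishing \eqref{eq:singleJPTvanishing}. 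Extracting the coefficient of $x^{\ell-2-i}$ from the factored $P(x)$ is what produces the $e_i$ on the right; there is no direct identification of the numbers $\langle\mu\rangle/r+k$ with Chern roots of $R^{\bullet}\pi_*\mathcal{L}$.

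Your reading of the boundedness condition is also off. The hypothesis $\max_{i\neq j}(b_i+b_j)\leq r$ has nothing to do with boundary divisors of $\MM_{0,1+\ell}$ or with restriction of $\Omega$ to strata. Its role is internal to Theorem~\ref{thm:double}: it forces every term with $t<\ell$ in the linear relation \eqref{eq:multipleBDKLMvanishing} to vanish (any such term groups at least two of the $r-b_i$ into one partition, whose size then exceeds $r-1$), collapsing the relation to the single vanishing \eqref{eq:singleJPTvanishing}. Moreover, you omit the second hypothesis entirely: it is \emph{negativity} $\mu<b$ that actually triggers the vanishing $P(\langle\mu\rangle+rk)=0$ for each $k$; boundedness only simplifies the shape of the relation. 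Without the polynomial-in-$x$ framing, the Riemann--Roch degree count, and the correct use of both hypotheses of Theorem~\ref{thm:double}, the argument as sketched does not close.
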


The answer to \textbf{Q3} can be given for both generalisations at the same time \textemdash \; that is, for arbitrarily many descendants and for arbitrary $r$ \textemdash\; but imposing $b=\emptyset$. The formula to compute these integrals is achieved simply by chaining together several existing results, and then simplifying the outcome by the genus-zero restriction. We explain and combine these results in Section \ref{sec:proofs}. Meanwhile we give here a temporary statement. 

\begin{prop}\label{prop:Hurwitz} Let $2g - 2 + n >0$ and let $r$ be a positive integer. Fix remainder classes 
$$
-\overline{\mu_1}, \dots, -\overline{\mu_n} \in \{0, \dots, r-1\}.
$$ 
Then for $\mu_i = \langle \mu_i \rangle + r[\mu_i]$ with $\langle \mu_i \rangle = r - (-\overline{\mu_i}) $ we have:
\begin{equation}
\int_{\MM_{0,n}}\frac{\Omega_{0,n}(r,0; -\overline{\mu_1}, \dots, -\overline{\mu_n})}{\prod_{i}(1 - \frac{\mu_i}{r}\psi_i)} 
=  \left(\prod_{i=1}^{n} \frac{\left(\frac{\mu_i}{r}\right)^{[\mu_i]}}{[\mu_i]!} \right)^{-1} \!\! \cdot c
\sum_{P \in CP}^{\textup{finite}} 
\prod_{l=1}^{\textup{finite}} (|I_{l}^{P}||L_{l}^{P}| - |J_{l}^{P}||K_{l}^{P}|).
\end{equation}
Here $c$ is an explicit combinatorial prefactor, and $|I_{l}^{P}| := \sum \mu_{i_l}$ for $i_l$ belonging to the subset $I^{P}_l \subset \{1, \dots, n\}$.\\
Moreover, there exists a polynomial $P_{\overline{\mu}}(x_1, \dots, x_n)$ of degree $n-3$ depending on the remainder classes $\overline{\mu_i}$ such that the evaluation $P_{\overline{\mu}}(\mu_1, \dots, \mu_n)$ recovers the expression above.
\end{prop}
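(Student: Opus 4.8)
The plan is to realise the left-hand side as a genus-zero Hurwitz-type count through the ELSV-type correspondence of \cite{LPSZ}, and then to invoke the explicit combinatorial control available in genus zero. The key input is that a descendant $\Omega$-integral of exactly this shape is the intersection-theoretic side of an ELSV-type formula: up to the explicit normalisation $\prod_i \frac{(\mu_i / r)^{[\mu_i]}}{[\mu_i]!}$, the quantity
\[
\int_{\MM_{0,n}}\frac{\Omega_{0,n}(r,0; -\overline{\mu_1}, \dots, -\overline{\mu_n})}{\prod_{i}(1 - \frac{\mu_i}{r}\psi_i)}
\]
is a connected genus-zero $r$-orbifold Hurwitz number with ramification orders governed by the $\mu_i$ and orbifold data recorded by the remainder classes $\overline{\mu_i}$ (compare \cite{JPT, DL}). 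My first step would be to make this dictionary precise, matching parametrisations on the two sides and reading off the combinatorial prefactor $c$ from the normalisation.

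Second, I would pass to the combinatorial side and use a graph-sum (equivalently tropical) formula for such genus-zero numbers. A connected genus-zero Hurwitz number counts weighted connected branched covers of $\PP^1$ by a rational curve; in genus zero the contributing tropical covers are trees, so the count reduces to a finite sum over combinatorial configurations $P$ — the set $CP$ — of products of local factors. Balancing at each internal structure forces each local factor to be a signed difference of products of partial sums of the $\mu_i$, which is precisely the expression $|I_l^P||L_l^P| - |J_l^P||K_l^P|$. Carrying out this specialisation and organising the finitely many configurations yields the stated product-sum.

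Third, for the polynomiality claim I would appeal to the piecewise polynomiality of double Hurwitz numbers of Goulden-Jackson-Vakil, in the form relevant to the present ELSV-type setting (compare \cite{DL}). Fixing the remainder classes $\overline{\mu_i}$ fixes the signs of all parts and therefore confines $(\mu_1, \dots, \mu_n)$ to a single chamber of the associated wall-and-chamber (resonance) decomposition; on one chamber the Hurwitz number is genuinely polynomial, of degree $\dim \MM_{0,n} = n - 3$. This produces the polynomial $P_{\overline{\mu}}$, explains its dependence on the remainder classes alone, and fixes its degree.

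The main obstacle will be the first step: threading the precise normalisation and the prefactor $c$ through the ELSV-type dictionary, and checking that the orbifold data $-\overline{\mu_i}$ on the $\Omega$-side translates correctly into cycle-type and chamber data on the Hurwitz side. The genus-zero collapse of the general graph sum into the clean $(|I||L| - |J||K|)$ form, though largely bookkeeping, likewise requires care to ensure that only trees contribute and that the local factors assemble exactly as claimed.
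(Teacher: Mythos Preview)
Your high-level strategy matches the paper's: pass from the $\Omega$-integral to a connected $r$-orbifold Hurwitz number via the ELSV-type formula (the paper does this in two steps, first \cite{LPSZ} to reach $\MM_{0,\emptyset-\mu}(\mathcal{B}\ZZ_r)$ and then \cite{JPT} to reach $h_{0,\mu}^{(r),\circ}$ with exactly the prefactor $\prod_i (\mu_i/r)^{[\mu_i]}/[\mu_i]!$), then evaluate the Hurwitz side combinatorially, then read off polynomiality.

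Where you diverge is the middle step. The paper does \emph{not} use a tropical/graph-sum formula; it uses the semi-infinite wedge formalism of Okounkov and Okounkov--Pandharipande, expressing $h_{g,\mu}^{(r),\circ}$ as a vacuum expectation of operators $\mathcal{E}_a(z)$, and then runs Johnson's commutation algorithm. The symbol $CP$ in the statement literally stands for Johnson's \emph{Commutation Patterns}, and the factor $|I_l^P||L_l^P|-|J_l^P||K_l^P|$ is exactly the argument $aw-bz$ in the commutator $[\mathcal{E}_a(z),\mathcal{E}_b(w)]=2\sinh\bigl(\tfrac{aw-bz}{2}\bigr)\mathcal{E}_{a+b}(z+w)$. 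The genus-zero specialisation is then the one-line observation that extracting the lowest power of $u$ replaces every $\sinh(X)$ by $X$. Your tropical route is morally related (Cavalieri--Johnson--Markwig make this correspondence precise), but ``balancing at internal structures'' does not by itself hand you the specific bilinear form $|I||L|-|J||K|$; you would still need to invoke the operator algebra, or the known equivalence with Johnson's patterns, to land on the stated expression.

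For the polynomiality clause, your appeal to Goulden--Jackson--Vakil is the wrong citation: their result is for ordinary double Hurwitz numbers with simple branching, whereas here one needs quasi-polynomiality of $r$-orbifold (one-part) Hurwitz numbers. The paper extracts this directly from \cite{JPT} (and notes the independent proof in \cite{DLPS}): the integral is a polynomial of degree $3g-3+n=n-3$ in the $\mu_i$ once the remainder classes $\langle\mu_i\rangle$ are fixed.
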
 

\subsection{Plan of the paper}
In the next section we provide the necessary background on $\Omega$-classes and in Section \ref{sec:proofs} we prove the four statements above. In Section \ref{sec:examples} we provide examples and counterexamples of the statements above.

\subsection*{Acknowledgements}

The author is supported by the SNSF Ambizione Grant ``Resurgent topological recursion, enumerative geometry and integrable hierarchies
'' hosted at the Section de Math\'{e}matique de l'Universit\'{e} de Gen\`{e}ve. 
This work is partly a result of the ERC-SyG project, Recursive and Exact New Quantum Theory (``ReNewQuantum'') which received funding from the European Research Council (ERC) under the European Union’s Horizon 2020 research and innovation programme under grant agreement No 810573, hosted, in the case of the author, at the Institut de Physique Th\`{e}orique Paris (IPhT), CEA, Universit\'{e} de Saclay, and at the Institut des Hautes \'Etudes Scientifiques (IHES), Universit\'{e} de Saclay. The author moreover thanks Adam Afandi and Alessandro Giacchetto for useful discussions, Johannes Schmitt for \textsc{admcycles} package support, and the INdAM group GNSAGA for support.

\vspace{1cm}

\section{Background}
\label{sec:Omega:classes}

In \cite{Mum83}, Mumford derived a formula for the Chern character of the Hodge bundle on the moduli space of curves $\overline{\mathcal{M}}_{g,n}$ in terms of tautological classes and Bernoulli numbers. Such class appears in the celebrated ELSV formula \cite{ELSV}, named after its four authors Ekedahl, Lando, Shapiro, Vainshtein, that is an equality between simple Hurwitz number and an integral over the moduli space of stable curves.

\medskip

A generalisation of Mumford's formula was computed in \cite{Chiodo}. The moduli space $\overline{\mathcal{M}}_{g,n}$ is substituted by the proper moduli stack $\overline{\mathcal{M}}_{g;a}^{r,s}$ of $r$-th roots of the line bundle
\begin{equation}
	\omega_{\log}^{\otimes s}\biggl(-\sum_{i=1}^n a_i p_i \biggr),
\end{equation}
where $\omega_{\log} = \omega(\sum_i p_i)$ is the log-canonical bundle, $r$ and $s$ are integers with $r$ positive, and $a = (a_1, \ldots, a_n) \in \{ 0,\ldots,r-1 \}^n$ is an integral $n$-tuple satisfying the modular constraint
\begin{equation}
	a_1 + a_2 + \cdots + a_n \equiv (2g-2+n)s \pmod{r}.
\end{equation}
This condition guarantees the existence of a line bundle whose $r$-th tensor power is isomorphic to $\omega_{\log}^{\otimes s}(-\sum_i a_i p_i)$. Let $\pi \colon \overline{\mathcal{C}}_{g;a}^{r,s} \to \overline{\mathcal{M}}_{g;a}^{r,s}$ be the universal curve, and $\mathcal{L} \to \overline{\mathcal C}_{g;a}^{r,s}$ the universal $r$-th root. In complete analogy with the case of moduli spaces of stable curves, one can define $\psi$-classes and $\kappa$-classes. There is moreover a natural forgetful morphism
\begin{equation}
	\epsilon \colon
	\overline{\mathcal{M}}^{r,s}_{g;a}
	\longrightarrow
	\overline{\mathcal{M}}_{g,n}
\end{equation}
which forgets the line bundle, otherwise known as the spin structure. It can be turned into an unramified covering in the orbifold sense of degree $2g - 1$ by slightly modifying the structure of $\overline{\mathcal{M}}_{g,n}$, introducing an extra $\ZZ_r$ stabilizer for each node of each stable curve (see \cite{JPPZ}).

\medskip

Let $B_m(x)$ denote the $m$-th Bernoulli polynomial, that is the polynomial defined by the generating series
\begin{equation}
	\frac{te^{tx}}{ e^t - 1} = \sum_{m=0}^{\infty} B_{m}(x)\frac{t^m}{m!}.
\end{equation}
The evaluations $B_m(0) = (-1)^m B_m(1) = B_m$ recover the usual Bernoulli numbers. The following formula provides an explicit formula for the Chern characters of the derived pushforward of the universal $r$-th root $\ch_m(r,s;a) = \ch_m(R^{\bullet} \pi_{\ast}{\mathcal L})$. The formula was obtained by Mumford for $r=1$ and $s=1$ \cite{Mum83}, then generalised by Bini to arbitrary integers $s$ \cite{Bini03}, then generalised by Chiodo to arbitrary positive integers $r$.

\begin{thm}[\cite{Chiodo}]
	The Chern characters $\ch_m(r,s;a)$ of the derived pushforward of the universal $r$-th root have the following explicit expression in terms of $\psi$-classes, $\kappa$-classes, and boundary divisors:
	\begin{equation} \label{eqn:Omega:formula}
		\ch_m(r,s;a)
		=
		\frac{B_{m+1}(\tfrac{s}{r})}{(m+1)!} \kappa_m
		-
		\sum_{i=1}^n \frac{B_{m+1}(\tfrac{a_i}{r})}{(m+1)!} \psi_i^m
		+
		\frac{r}{2} \sum_{a=0}^{r-1} \frac{B_{m+1}(\tfrac{a}{r})}{(m+1)!} \, j_{a,\ast} \frac{(\psi')^m - (-\psi'')^m}{\psi' + \psi''}. 
	\end{equation}
	Here $j_a$ is the boundary morphism that represents the boundary divisor with multiplicity index $a$ at one of the two branches of the corresponding node, and $\psi',\psi''$ are the $\psi$-classes at the two branches of the node.
\end{thm}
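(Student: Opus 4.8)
The plan is to prove the formula by a direct application of Grothendieck--Riemann--Roch (GRR) to the universal curve $\pi\colon\overline{\mathcal{C}}^{r,s}_{g;a}\to\overline{\mathcal{M}}^{r,s}_{g;a}$ and the universal $r$-th root $\mathcal{L}$, following Mumford's original argument for $r=1,\,s=1$ and its refinements by Bini and Chiodo. Writing $\xi=c_1(\mathcal{L})$, GRR states
\begin{equation}
\ch\bigl(R^{\bullet}\pi_{\ast}\mathcal{L}\bigr)=\pi_{\ast}\Bigl(e^{\xi}\,\mathrm{Td}(T_{\pi})\Bigr),
\end{equation}
where $T_{\pi}$ is the relative tangent complex. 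First I would record the defining isomorphism $\mathcal{L}^{\otimes r}\cong\omega_{\log}^{\otimes s}(-\sum_i a_i p_i)$, which in rational cohomology of $\overline{\mathcal{C}}^{r,s}_{g;a}$ reads $r\xi=s\,c_1(\omega_{\log,\pi})-\sum_i a_i[S_i]$, with $[S_i]$ the class of the $i$-th section; the modular constraint $\sum a_i\equiv(2g-2+n)s\pmod r$ is exactly what makes $\xi$ an integral class, i.e.\ what guarantees the root exists. This reduces everything to extracting the degree-$m$ part of $\pi_{\ast}\bigl(e^{\xi}\mathrm{Td}(T_{\pi})\bigr)$.

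The universal curve carries three relevant loci, and I would split the pushforward accordingly: the smooth interior of the fibres, the sections $S_i$, and the node locus. Away from the nodes $T_{\pi}=\omega_{\pi}^{\vee}$ is a line bundle, so with $K=c_1(\omega_{\pi})$ one has $\mathrm{Td}(T_{\pi})=\tfrac{-K}{1-e^{K}}=\tfrac{K}{e^{K}-1}$; at the nodes the relative tangent complex differs from $\omega_{\pi}^{\vee}$, and its correct contribution must be computed via the normalization $\nu\colon\widetilde{\mathcal{C}}\to\overline{\mathcal{C}}^{r,s}_{g;a}$. I would isolate the three contributions and evaluate them one at a time.

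On the interior $\xi=\tfrac{s}{r}K$, so the integrand becomes $\tfrac{Ke^{(s/r)K}}{e^{K}-1}$, which by the Bernoulli generating function $\tfrac{te^{tx}}{e^{t}-1}=\sum_m B_m(x)\tfrac{t^m}{m!}$ (with $t=K$, $x=s/r$) equals $\sum_m B_m(s/r)\tfrac{K^m}{m!}$; pushing forward via $\pi_{\ast}(K^{m+1})=\kappa_m$ yields the first term $\tfrac{B_{m+1}(s/r)}{(m+1)!}\kappa_m$. Near each section the local multiplicity index of $\mathcal{L}$ is $a_i$, so the same generating-function computation localized along $S_i$ (using $S_i^2=-\psi_i$) produces $-\tfrac{B_{m+1}(a_i/r)}{(m+1)!}\psi_i^m$. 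This reproduces the Mumford--Bini part essentially verbatim, and I expect it to be routine.

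The node term is the delicate, genuinely $r$-spin part, and it is where I expect the real work. After normalizing, a node splits into two branches carrying multiplicity indices $a'$ and $a''$ constrained by the balancing condition $a'+a''\equiv 0\pmod r$; summing over the index $a$ on one branch gives the sum $\sum_{a=0}^{r-1}$. The two $\psi$-classes at the preimages of the node are $\psi',\psi''$, and expanding the local GRR contribution of $e^{\xi}\mathrm{Td}(T_{\pi})$ at the resolved node produces $B_{m+1}(a/r)$ together with the polynomial
\begin{equation}
\frac{(\psi')^m-(-\psi'')^m}{\psi'+\psi''}=\sum_{i+j=m-1}(\psi')^i(-\psi'')^j.
\end{equation}
The constant $\tfrac{r}{2}$ is the crux: the factor $r$ reflects the extra $\mathbb{Z}_r$ automorphism attached to each node in $\overline{\mathcal{M}}^{r,s}_{g;a}$ (which is what turns $\epsilon$ into an unramified cover of orbifold degree $2g-1$, as in the text), while the $\tfrac12$ accounts for the two orderings of the branches exchanged by $j_a\leftrightarrow j_{r-a}$. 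Getting this constant and the eigenvalue bookkeeping of the $\mathbb{Z}_r$-action in orbifold GRR exactly right---rather than the interior and section terms, which are standard---is the main obstacle; I would follow Mumford's local node analysis refined by the character decomposition of $\mathcal{L}$ under the $\mathbb{Z}_r$-stabilizer, as in Chiodo's computation.
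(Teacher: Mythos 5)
The first thing to note is that the paper contains no proof of this statement: it is imported directly from \cite{Chiodo} as background, so your proposal can only be measured against the cited source. Measured that way, your skeleton is faithful to the actual Mumford--Bini--Chiodo argument: GRR applied to the universal curve, the splitting of the pushforward into interior, section, and node contributions, and the Bernoulli generating function $\tfrac{te^{tx}}{e^t-1}$ converting $e^{\xi}\,\mathrm{Td}$ into the $\tfrac{B_{m+1}(s/r)}{(m+1)!}\kappa_m$ and $\psi_i$-terms is exactly the right route, and you correctly identify the node term as the locus of all the new difficulty.

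Treated as a proof, however, the gap sits precisely where you flag it, and it is larger than a constant to be pinned down. The GRR identity you start from is not available as stated: $\pi$ has nodal fibres and the universal curve is a Deligne--Mumford stack with $\ZZ_r$-stabilizers along the node locus (and along the markings in Chiodo's setup), so $T_\pi$ is only a complex and the na\"ive smooth-case formula does not apply. Chiodo's proof runs through Toen's GRR for stacks, in which the Chern character is evaluated on the inertia stack; the entire node term of \eqref{eqn:Omega:formula} is the resummation of the twisted-sector contributions, with the $\ZZ_r$-character of $\mathcal{L}$ at each node weighted by roots of unity and the resulting character sums converted into the Bernoulli polynomials $B_{m+1}(a/r)$. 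Your proposal names this step (``orbifold GRR'', ``character decomposition'') but defers it wholesale to the reference, and it is the whole content of the theorem beyond Mumford and Bini --- so the crux is acknowledged rather than proved. Relatedly, your provenance for $\tfrac{r}{2}$ is a heuristic, not a derivation: the $\tfrac12$ from branch swap is right, but the $r$ does not come from simply counting the node automorphisms; it emerges from the interplay of the degree-$\tfrac{1}{r}$ pushforward along the $\ZZ_r$-gerbe at the node with the fact that the smoothing parameter of a twisted node is an $r$-th root of that of the coarse node, rescaling the boundary $\psi$-classes by $\tfrac{1}{r}$. Two smaller bookkeeping pitfalls in the ``routine'' part: with your interior/section split, $\pi_*(K^{m+1})$ produces Mumford's $\kappa$-class, whereas the $\kappa_m$ in the formula is the Arbarello--Cornalba class $\pi_*\bigl(c_1(\omega_{\log})^{m+1}\bigr)$, and the discrepancy $\sum_i\psi_i^m$ must be reabsorbed into the section terms; and on the twisted curve the sections are $\ZZ_r$-gerbes, so the coarse relation $S_i^2=-\psi_i$ needs its stacky correction, with compensating factors of $r$. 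None of this is unfixable --- it is exactly what \cite{Chiodo} carries out --- but as written your text is an accurate roadmap to that proof rather than a proof.
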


We can then consider the family of Chern classes pushforwarded on the moduli spaces of stable curves
\begin{equation}
	\Omega_{g,n}^{[x]}(r,s;\vec{a})
	=
	\epsilon_{\ast}  \exp{\Biggl(
		\sum_{m=1}^\infty (-1)^m x^{m} (m-1)! \, \ch_m(r,s;\vec{a})
	\Biggr)}
	\in
	H^{\textup{even}}(\overline{\mathcal{M}}_{g,n}).
\end{equation}
 We will omit the variable $x$ when $x = 1$.
 
 \begin{cor} \cite{JPPZ} \label{cor:Omega:Exp}
The class $\Omega_{g,n}^{[x]}(r,s;\vec{a})$ is equal to
\begin{multline*} 
\hspace{-10pt}\sum_{\Gamma\in \mathsf{G}_{g,n}} 
\sum_{w\in \mathsf{W}_{\Gamma,r,s}}
\frac{r^{2g-1-h^1(\Gamma)}}{|\Aut(\Gamma)| }
\;
\xi_{\Gamma *}\Bigg[ \prod_{v \in V(\Gamma)} e^{-\sum\limits_{m = 1} (-1)^{m-1} x^m \frac{B_{m+1}(s/r)}{m(m+1)}\kappa_m(v)} \; \cdot 
\prod_{i=1}^n e^{\sum\limits_{m = 1}(-1)^{m-1} x^m \frac{B_{m+1}(a_i/r)}{m(m+1)} \psi^m_{i}} \cdot 
\\ \hspace{+10pt}
\prod_{\substack{e\in E(\Gamma) \\ e = (h,h')}}
\frac{1-e^{\sum\limits_{m \geq 1} (-1)^{m-1} x^m \frac{B_{m+1}(w(h)/r)}{m(m+1)} [(\psi_h)^m-(-\psi_{h'})^m]}}{\psi_h + \psi_{h'}} \Bigg]\, .
\end{multline*} 
Here $B_{m+1}(y)$ are Bernoulli polynomials, $\mathsf{G}_{g,n}$ is the finite set of stable graphs of genus $g$ with $n$ legs, $\mathsf{W}_{\Gamma,r,s}$ is the finite set of decorating the leg $i$ with $a_i$ and any other half-edge with an integer in $\{0, \dots, r-1\}$ in such a way that decorations of half-edges of the same edge ($e \in E(\Gamma)$) sum up to $r$ and locally on each vertex ($v \in V(\Gamma)$) the sum of all decorations is congruent to $(2g - 2 + n)s$ modulo $r$. 
\end{cor}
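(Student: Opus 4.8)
The plan is to substitute Chiodo's formula \eqref{eqn:Omega:formula} into the definition
\[
\Omega_{g,n}^{[x]}(r,s;\vec a) = \epsilon_*\exp\Bigl(\sum_{m\geq 1}(-1)^m x^m (m-1)!\,\ch_m(r,s;\vec a)\Bigr),
\]
expand the exponential, and reorganise the outcome according to the boundary strata of the spin space $\overline{\mathcal M}^{r,s}_{g;a}$. First I would group the three types of terms of $\ch_m$ as $A=A_\kappa+A_\psi+A_\partial$ and do the sign bookkeeping: since $(-1)^m x^m (m-1)!\cdot\tfrac{B_{m+1}(y)}{(m+1)!}=-(-1)^{m-1}x^m\tfrac{B_{m+1}(y)}{m(m+1)}$, the $\kappa_m$-summand reproduces the vertex exponent $-(-1)^{m-1}x^m\tfrac{B_{m+1}(s/r)}{m(m+1)}\kappa_m$ and the $\psi_i^m$-summand the leg exponent $(-1)^{m-1}x^m\tfrac{B_{m+1}(a_i/r)}{m(m+1)}\psi_i^m$ of the statement, verbatim and with correct signs. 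Since all tautological classes commute in even cohomology, $\exp(A)=\exp(A_\kappa)\exp(A_\psi)\exp(A_\partial)$; under $\xi_\Gamma^*$ the class $\kappa_m$ restricts to $\sum_v\kappa_m(v)$ and $\psi_i$ to the leg cotangent classes, so $\exp(A_\kappa)\exp(A_\psi)$ will appear unchanged as the per-vertex $\kappa$-factor and per-leg $\psi$-factor on each vertex of the graphs produced below. All the combinatorial structure is therefore generated by $\exp(A_\partial)$.

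The core is the expansion of $\exp(A_\partial)$. Each summand of $A_\partial$ is supported on a single boundary divisor $D_a=j_{a,*}(1)$, and the essential input is the self-intersection relation $D_a^{\,k+1}=j_{a,*}\bigl((-\psi'-\psi'')^k\bigr)$, which telescopes the exponential of a single boundary term into the edge building block $\tfrac{1-e^{(\cdots)}}{\psi'+\psi''}$ displayed in the statement; the passage from $e^{(\cdots)}$ to $1-e^{(\cdots)}$, together with the accompanying sign change $(-1)^m\mapsto(-1)^{m-1}$ in the exponent, is produced precisely by this telescoping. Taking the power $A_\partial^{k}/k!$ and iterating, the contributions organise into a sum over stable graphs $\Gamma$ with $k$ edges: the codimension-$k$ strata are the images $\xi_{\Gamma}$, the branch classes $\psi',\psi''$ become the half-edge classes $\psi_h,\psi_{h'}$, and the factor $1/|\Aut(\Gamma)|$ arises from weighing the $1/k!$ of the exponential against the number of edge-orderings and branch-labellings realising a fixed isomorphism type. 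The index $a$ of $j_{a,*}$ and the summation $\tfrac{r}{2}\sum_{a=0}^{r-1}$ of Chiodo's formula turn into the half-edge decorations: the $\tfrac12$ accounts for the two ways of distributing $a$ and $r-a$ across the branches of a node, so that after symmetrisation each edge carries a weight with $w(h)+w(h')=r$.

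Finally I would push forward along $\epsilon$. Restricted to the $\Gamma$-stratum the forgetful map records the $r$-th root on the nodal curve; after normalisation this is the datum of multiplicities at the half-edges, i.e.\ a weighting $w\in\mathsf W_{\Gamma,r,s}$ obeying $w(h)+w(h')=r$ on each edge and the local constraint $\sum_{\text{half-edges at }v}\equiv(2g-2+n)s\pmod{r}$ at each vertex, with the legs fixed to $a_i$. Thus $\epsilon_*$ replaces the spin stratum by $\xi_{\Gamma*}$ of the corresponding stratum of $\overline{\mathcal M}_{g,n}$, summed over $w$. The remaining point — and the one I expect to be the main obstacle — is to pin down the exact multiplicity $r^{2g-1-h^1(\Gamma)}$ with which each weighting contributes. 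This is not a formal manipulation: it rests on the deformation theory of $r$-th roots on nodal curves and on the extra $\ZZ_r$-stabilisers introduced at the nodes. The generic orbifold degree of $\epsilon$ is $r^{2g-1}$ (the $r^{2g}$ torsion line bundles on a smooth curve divided by the generic $\ZZ_r$ rescaling of the root), and on the $\Gamma$-stratum one must show that the roots compatible with a fixed $w$, after accounting for the $r^{h^1(\Gamma)}$ monodromy ambiguity around the independent cycles of $\Gamma$ (equivalently $\dim_{\ZZ_r}H^1(\Gamma;\ZZ_r)=h^1(\Gamma)$), number $r^{2g-1-h^1(\Gamma)}$. As a consistency check, the trivial one-vertex graph has $h^1(\Gamma)=0$ and recovers the generic degree $r^{2g-1}$. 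Once this count is in place, collecting the vertex, leg, and edge factors reproduces the claimed formula.
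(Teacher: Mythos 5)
The paper itself contains no proof of this corollary: the statement is quoted from \cite{JPPZ}, so the relevant comparison is with the derivation given there, and your proposal reconstructs it faithfully. Substituting Chiodo's formula into the definition of $\Omega^{[x]}_{g,n}(r,s;\vec a)$, splitting the exponent into $\kappa$-, $\psi$- and boundary parts, telescoping the exponential of the boundary part via the self-intersection formula, and then computing the multiplicity of each decorated stable graph under $\epsilon_*$ is exactly the argument of \cite{JPPZ}. Your sign bookkeeping for the vertex and leg factors is exact, and the telescoping you describe is precisely the mechanism converting $e^{(\cdots)}$ into $(1-e^{(\cdots)})/(\psi_h+\psi_{h'})$ with the flip $(-1)^m\mapsto(-1)^{m-1}$, as one sees from $\sum_{k\ge 1}\frac{1}{k!}\,\beta^k c_1(N)^{k-1}=(e^{\beta c_1(N)}-1)/c_1(N)$ under $j_*$.

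Two refinements are needed to make the sketch airtight. First, the telescoping takes place upstairs on $\overline{\mathcal{M}}^{r,s}_{g;a}$, where the boundary divisor indexed by $a$ has normal bundle with first Chern class $-(\psi'+\psi'')/r$, not $-(\psi'+\psi'')$, because of the $\ZZ_r$-stabilizer at the node; this contributes one factor of $r$ per edge which, together with the prefactor $\frac{r}{2}\sum_a$ (your explanation of the $\frac12$ as cancelling the double count of ordered branch decorations $(a,r-a)$ is correct), feeds the global power of $r$. If you insist on telescoping with $-(\psi'+\psi'')$, all these $r$'s must instead be recovered inside the degree of $\epsilon$, which works but has to be said explicitly. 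Second, the multiplicity $r^{2g-1-h^1(\Gamma)}$ that you flag as the main obstacle is indeed the only non-formal input, but your heuristic is essentially the proof once made precise: the stratum of $\overline{\mathcal{M}}^{r,s}_{g;a}$ lying over $\xi_\Gamma$ with fixed weighting $w$ has orbifold degree $\prod_{v} r^{2g_v-1}\cdot r^{|E(\Gamma)|}$, and using $g=\sum_v g_v+h^1(\Gamma)$ together with $h^1(\Gamma)=|E(\Gamma)|-|V(\Gamma)|+1$ this equals $r^{2g-1-h^1(\Gamma)}$. Your consistency check extends to the full count: the number of admissible weightings with fixed leg decorations is $r^{h^1(\Gamma)}$ (one free parameter per edge, one congruence per vertex, one of which is globally redundant), so summing over $w\in\mathsf{W}_{\Gamma,r,s}$ recovers the generic degree $r^{2g-1}$ — which also shows, in passing, that the paper's phrase ``degree $2g-1$'' for the covering $\epsilon$ is a typo for $r^{2g-1}$.
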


\begin{rem}\label{rem:properties}
By looking at the formula above is it easy to deduce a few properties of the classes $\Omega$, see \cite{GLN} for a more exhausive list. For instance, $\Omega^{[x]}(r,r;\vec{a}) = \Omega^{[x]}(r,0;\vec{a})$, as Bernoulli polynomials satisfy $B_{m+1}(0) = B_{m+1}(1) = B_{m+1}$. Similarly by $B_{m+1}(1 - X) = (-1)^{m+1}B_{m+1}(X)$ and Newton identities one can show that $\Omega^{[x]}(r,s;a_1, \dots, a_i + r, \dots, a_n) = \Omega^{[x]}(r,s;a_1, \dots, a_n) \cdot \left( 1 + \frac{a_i}{r}\psi_i \right)$.
\end{rem}

\subsection{Riemann--Roch for $\Omega$-classes}

The Riemann--Roch theorem for an $r$-th root $L$ of $\omega_{\log}^{\otimes s}(-\sum_i a_i p_i)$ provides the following relation:
\begin{equation}
	\frac{(2g - 2 + n)s - \sum_i a_i}{r} - g + 1 = h^0(C,L) - h^1(C,L).
\end{equation}
In some cases, i.e. for particular parametrisations of $r,s,a_i$ and for topologies $(g,n)$, it can happen that either $h^0$ or $h^1$ vanish, turning $\Omega$ into an actual total Chern class of a vector bundle, so that the Riemann-Roch equation provides the rank of this bundle.  If that happens, the Riemann--Roch equation provides a bound for the complex cohomological degree of $\Omega$:
\begin{equation}
[\deg_{H^*} = k].\Omega_{g,n}(r,s;\vec{a}) = 0, \qquad \qquad \text{ for } k > \rank(R^{\bullet} \pi_{\ast}{\mathcal L}),
\end{equation}
which are usually trivial or not depending on whether the rank $< 3g - 3 + n$. One of these instances of parametrisations is provided by the following result of Jarvis, Kimura, and Vaintrob.

\begin{thm}[\cite{JKV}, Proposition 4.4] \label{thm:JKV}
Let $g=0$, let $s=0$, let $n \geq 3$, consider $a_i$ all strictly positive except at most a single $a_j$ which can be positive, or zero, or equal to $-1$. Then the $r$-th universal root does not have any global section, that is, we have
$$
h^0 = 0.
$$
\end{thm}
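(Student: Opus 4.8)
The plan is to prove the vanishing pointwise on the moduli stack: it suffices to show that $h^0(C,L)=0$ for every geometric point, i.e.\ for every genus-zero twisted stable curve $C$ equipped with an $r$-th root $L$ of $\omega_{\log}^{\otimes 0}(-\sum_i a_i p_i)=\mathcal{O}_C(-\sum_i a_i p_i)$. The triviality of $\omega_{\log}^{\otimes 0}$ will be used throughout, as it makes the normalisation of a node harmless. I would first dispose of the smooth case $C=\PP^1$: here $L$ is an honest line bundle with $\deg L=-\tfrac1r\sum_i a_i$, and the hypotheses force $\sum_i a_i>0$, since $n\geq 3$ leaves at least two legs with $a_i\geq 1$ while the single exceptional leg contributes at worst $-1$, whence $\sum_i a_i\geq 1$. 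Thus $\deg L<0$ and $h^0(C,L)=0$.

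For the reducible case I would exploit that a genus-zero stable curve is a tree of $\PP^1$'s. The essential local input is the degree of the restriction of $L$ to a component, pushed forward to the coarse $\PP^1$: for a component $C_v$ carrying legs with indices $\{a_i\}_{i\in v}$ and nodes with branch-twists $\{w_q(v)\}$, this degree equals $-\tfrac1r\big(\sum_{i\in v}a_i+\sum_q w_q(v)\big)$, which is an integer precisely by the modular constraint $\sum_{i\in v}a_i+\sum_q w_q(v)\equiv 0\pmod r$ at $v$ (recall $s=0$). The hypothesis that at most one index is non-positive guarantees $\sum_{i\in v}a_i\geq -1$, so this integer is $\leq 0$ on every component, and it equals $0$ only in borderline configurations; consequently $H^0(C_v,L|_{C_v})=0$ unless the degree is $0$, where the space of sections is one-dimensional. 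It is exactly these \emph{degree-zero} components that obstruct a naive component-by-component argument.

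I would then run an induction on the number of components, pruning a leaf $v$ joined to the remainder $C''$ at a node $q$ and using the normalisation sequence there. The twist $w_q$ dictates the behaviour: at an untwisted node ($w_q=0$) one has the usual sequence $0\to L\to \nu_*\nu^*L\to \CC_q\to 0$, so that a global section is a pair of sections matching in value at $q$; at a twisted node the two branches decouple and both sections are forced to vanish at $q$. Since $s=0$, the partially normalised curve is again an $r$-th root datum of the same shape, with $q$ now a marking of index $w_q(C'')\in\{0,\dots,r-1\}$; crucially no index equal to $-1$ is ever created. Combining this with the degree dichotomy, a component bearing a strictly positive leg is section-free and, through an untwisted node, pins the neighbouring section to zero.

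The main obstacle is the propagation of these vanishing conditions past degree-zero components, together with the fact that a naive bound $h^0(C)\leq h^0(C'')$ degrades. The correct bookkeeping is to induct not on $h^0(C,L)$ alone but on $h^0$ of $L$ twisted down by the markings at which previous untwisted prunings have already forced vanishing; one then shows that any connected subcurve which could support a nonzero section must meet a section-free component along an untwisted node, at which the section is killed. The one delicate configuration — the $-1$ leg sitting on a component with no positive leg and a single twisted node — is precisely the borderline degree-zero case, and it is excluded for the original curve by the hypothesis: the single-non-positive-index condition pins the total node-twist of any degree-zero component to the minimal value $-\sum_{i\in v}a_i\leq 1$, forcing it to possess an untwisted node through which vanishing propagates, while in intermediate curves the would-be offending $0$-marking always carries exactly the vanishing condition inherited from the untwisted pruning that produced it. Verifying that this propagation closes independently of the pruning order, anchored by the strictly positive leg supplied by $n\geq 3$, is the heart of the argument.
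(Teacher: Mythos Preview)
The paper does not prove this statement at all: it is quoted verbatim as Proposition~4.4 of \cite{JKV} and used as a black box to derive the Riemann--Roch vanishing \eqref{eq:RR_JKV}. There is therefore no ``paper's own proof'' to compare against; your proposal is an attempt to reprove a result the paper imports from the literature.

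As for the content of your sketch: the smooth case is clean and correct, and the strategy for the reducible case --- compute the degree of $L$ on each component of the tree via the local modular constraint, then propagate vanishing through nodes by induction on the number of components --- is the natural one and is essentially how the result is established in \cite{JKV}. However, your proposal is explicitly incomplete: you write that ``verifying that this propagation closes independently of the pruning order\ldots\ is the heart of the argument'' without actually carrying it out. The genuine work lies exactly there. In particular, your claim that a degree-zero component is ``forced to possess an untwisted node through which vanishing propagates'' needs a careful case analysis: a component carrying only the exceptional $a_j=-1$ marking and at least two nodes can have degree zero with one node twisted (twist equal to $1$) and the remaining nodes untwisted, so one must argue that the untwisted nodes still connect it to a section-killing subcurve, and that the orbifold structure at the twisted node already forces the would-be constant section to vanish there. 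None of this is hard, but it is not done in your write-up; what you have is a correct outline rather than a proof.
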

Under the condition of the theorem above, the rank of $R^{\bullet} \pi_{\ast}{\mathcal L}$ equals $h^1$, and therefore one gets:
\begin{equation}\label{eq:RR_JKV}
[\deg_{H^*} = k].\Omega_{0,n}(r,s;\vec{a}) = 0, \qquad \qquad \text{ for } k > \frac{\sum_i a_i }{r} - 1.
\end{equation}

\subsection{String equation for $\Omega$-classes} 

It is known \cite{LPSZ, GLN} that if $0 \leq s \leq r$ then 
\begin{equation}
\Omega(r,s; a_1, \dots, a_{n}, a_{n+1} = s) = \pi^{*}\Omega(r,s; a_1, \dots, a_{n}).
\end{equation}
By projection formula, this implies the string equation:
\begin{equation}\label{eq:stringOmega}
\int_{\overline{\mathcal{M}}_{g, n+1}} \frac{\Omega(r,s; a_1, \dots, a_{n}, a_{n+1} = s)}{\prod_{i=1}^n (1 - x_i \psi_i)} = (x_1 + \dots + x_n) \int_{\overline{\mathcal{M}}_{g, n}} \frac{\Omega(r,s; a_1, \dots, a_{n})}{\prod_{i=1}^n (1 - x_i \psi_i)}.
\end{equation}
By remark \ref{rem:properties}, $s=r$ and $s=0$ are interchangeable.

\subsection{Relations for integrals of $\Omega$-classes arising from Topological Recursion}
Topological recursion is a universal recursive procedure that produces solutions of enumerative geometric problems (see e.g. \cite{Eyn16} for an introduction). Let us very briefly mention how this can be useful to produce relations between integrals of the $\Omega$-classes. In \cite{BDKLM}, this machinery was employed to generate double Hurwitz numbers. Although they are \textit{by definition polynomials} in some formal variables $q_1, \dots, q_r$ taking care of ramification conditions, what is generated by topological recursion are formal power series containing poles in exactly one of these variables, namely $q_r$. Polynomiality implies that the coefficient of $q_r^{-k}$ for $k>0$, which can be expressed as linear combinations of $\Omega$-classes integrals, vanishes. For more details on why $\Omega$-classes integrals appear in double Hurwitz numbers (relation known as ELSV-type formulae) see e.g. \cite{LPSZ}. These vanishing result in the following statement.

\begin{thm}[\cite{BDKLM}] \label{thm:double} Let $2g - 2 + n + \ell > 0$ and let $r$ be a positive integer, and $s=0$.
\begin{itemize}
\item Let $1 \leq \mu_1, \dots, \mu_n \leq r$, and let $\mu$ be their sum.
\item Let $1 \leq b_1, \dots, b_{\ell} \leq r-1$, and let $b$ be their sum.
\item Impose  $b \equiv \mu \; (\!\!\!\! \mod r)$. Then we can write $b = \mu + r \cdot \delta$ for some integer $\delta$.
\end{itemize}
If
\begin{equation}
\mu < b, \text{ or equivalently, } \delta > 0 \qquad \qquad \text{ (negativity), }
\end{equation}
then the following finite linear combination of $\Omega$-integrals vanishes:
\begin{equation}\label{eq:multipleBDKLMvanishing}
\sum_{t = 1}^{\ell} \frac{(-1)^{\ell - t}}{t!} 
\!\!\!
\sum_{
\substack{
\boldsymbol{\rho} \in 
(\mathscr{\tilde{P}}_{r - 1})^{k} \\ \sqcup_{\kappa} \rho^{(\kappa)} = \textbf{b}^{\vee}
}
}
\prod_{\kappa = 1}^{t} \frac{\big[\frac{r - |\rho^{(\kappa)}|}{r}\big]_{\ell(\rho^{(\kappa)}) - 1}}{|\Aut{\rho^{(\kappa)}}|} \int_{\overline{\mathcal{M}}_{g,n + t}} 
\!\!\!\!\! \frac{\Omega(r,0; - \overline{\mu_1},\dots, -  \overline{\mu_n},\dots, r - |{\rho}^{(1)}|, \dots, r - |{\rho}^{(t)}|)}{\prod_{i = 1}^n \big(1 - \frac{\mu_i}{r}\psi_i\big)} = 0.
\end{equation}
Here $\mathscr{\tilde{P}}_{r - 1}$ is the set of partitions of size at most $r-1$, the Pochhammer symbol $[x]_a := x(x-1) \cdots (x- a+1)$ stands for the descending factorial, and $\textbf{b}^{\vee} = (r - b_1, \dots, r - b_{\ell})$.\\

In particular, if the condition
\begin{equation}
\max \limits_{i \neq j}(b_i + b_j) \leq r \qquad \qquad \text{ (boundedness) }
\end{equation}
is satisfied, then all summands but a single one (the term for $t=\ell$ and for $|{\rho}^{(\kappa)}| = 1$ for $\kappa = 1, \dots, \ell$) straighforwardly disappear in the relation above. In this case we obtain:
\footnote{
Some time before \cref{thm:double}, a shadow of this statement \textemdash \; already degenerated under both the negativity and the boundedness condition \textemdash \; was derived in \cite[Theorem 2]{JPT}. The entire relation in \eqref{eq:multipleBDKLMvanishing} does not make its appearance. On the other hand, \cite{JPT}[Theorem 2] carries another sufficient condition for \eqref{eq:singleJPTvanishing} to hold: this condition is

\begin{equation} 
\mu + \delta < \ell, \; \text{ or equivalently, }\;  \mu < \frac{\sum_{j=1}^{\ell} (r - b_j)}{(r-1)} \qquad \qquad \text{ (strong negativity).}
\end{equation}
Strong negativity, as the name suggests, implies negativity. However, strong negativity is not weaker nor stronger than the combination of boundedness and negativity: one can find counterexamples of both phenomena, as well as examples of parametrisations satisfying all three conditions. In fact, for the purpose of this work strong negativity does not provide any new information: whenever strong negativity occurs, boundedness also does, therefore Theorem \ref{thm:double} suffices.
}

\begin{equation}\label{eq:singleJPTvanishing}
\int_{\overline{\mathcal{M}}_{g, n+\ell} } \frac{\Omega(r,0; -\overline{\mu_1}, \dots, -\overline{\mu_n}, b_1, \dots, b_{\ell})}{\prod_{i=1}^n (1- \frac{\mu_i}{r}\psi_i)} = 0.
\end{equation}
\end{thm}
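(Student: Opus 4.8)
The plan is to read the $\Omega$-integrals in \eqref{eq:multipleBDKLMvanishing} as coefficients in a generating series of connected double Hurwitz numbers, and then to play the analytic output of topological recursion, which carries poles, against the polynomiality of the numbers themselves, which forbids them.

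First I would invoke the ELSV-type formula of \cite{LPSZ} (resting on \cite{JPPZ}): the connected double Hurwitz numbers of order $r$ with source profile recorded by $(\mu_1,\dots,\mu_n)$ and target profile recorded by $b_1,\dots,b_\ell$ are precisely integrals of the shape
\[
\int_{\overline{\mathcal{M}}_{g,n+t}} \frac{\Omega(r,0;\, -\overline{\mu_1},\dots,-\overline{\mu_n},\,\dots)}{\prod_{i=1}^n \bigl(1 - \tfrac{\mu_i}{r}\psi_i\bigr)}.
\]
Assembling these over all $\mu_i$ builds the correlation differentials $\omega_{g,n}$, which by \cite{BDKLM} are computed by topological recursion on the associated spectral curve.

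The heart of the argument is then a dichotomy. On one hand, double Hurwitz numbers are piecewise polynomial in the ramification data by Goulden--Jackson--Vakil, and in the chamber pinned down by the fixed remainder classes $-\overline{\mu_i}$ they are genuine polynomials; hence the generating series has no pole in the distinguished variable $q_r$ attached to the $b$-ramification. On the other hand, the residue formula of topological recursion visibly produces a Laurent expansion in $q_r$ with a nonzero principal part. Matching the two forces every coefficient of $q_r^{-k}$ with $k>0$ to vanish. The negativity hypothesis $\mu<b$, equivalently $\delta>0$, is exactly what places us at such a strictly negative power (of order governed by $\delta$), so that a nontrivial relation is available; expanding the recursion kernel and collecting that coefficient should reproduce the finite linear combination on the left-hand side of \eqref{eq:multipleBDKLMvanishing}, with the descending factorials $[\tfrac{r-|\rho^{(\kappa)}|}{r}]_{\ell(\rho^{(\kappa)})-1}$, the factors $1/|\Aut \rho^{(\kappa)}|$, and the sum over partitions of $\textbf{b}^{\vee}$ in $\mathscr{\tilde{P}}_{r-1}$ all surfacing from the combinatorics of the expansion.

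I expect this last identification to be the main obstacle: matching the pole coefficient of the topological-recursion output term by term against the stated sum, while correctly bookkeeping every prefactor and every way of distributing the entries of $\textbf{b}^{\vee}$ into blocks. Granting it, the specialisation \eqref{eq:singleJPTvanishing} falls out cleanly. Under boundedness $\max_{i\neq j}(b_i+b_j)\leq r$ one has $(r-b_i)+(r-b_j)=2r-(b_i+b_j)\geq r$, so no two entries of $\textbf{b}^{\vee}$ can share a block $\rho^{(\kappa)}$ of size at most $r-1$; every block is therefore a singleton, which forces $t=\ell$, makes each descending factorial equal to the empty product $1$ and each automorphism factor $|\Aut\rho^{(\kappa)}|=1$, and lets the $\ell!$ ways of distributing the $\ell$ singletons cancel the $1/\ell!$, leaving the single surviving integral of \eqref{eq:singleJPTvanishing}.
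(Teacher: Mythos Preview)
Your proof strategy matches the mechanism the paper sketches immediately before the statement: double Hurwitz numbers are polynomials in the $q_i$, whereas topological recursion produces a formal expansion with genuine poles in $q_r$, so the principal part must vanish term by term, and the negativity condition $\mu<b$ is precisely what selects a strictly negative power. The paper does not actually prove \cref{thm:double} but cites it from \cite{BDKLM}; your outline is a faithful summary of that argument, and your derivation of the simplification under boundedness (two entries of $\textbf{b}^{\vee}$ cannot share a block of size $\leq r-1$, forcing $t=\ell$ and all singletons) is exactly the ``straightforward disappearance'' the paper asserts without spelling out.
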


\subsubsection{The $r=2$ case}
Let us briefly discuss the specialisation of the result above to $r=2$. 
\begin{enumerate}
\item All $b_j$ must equal to one, and therefore $b = \ell$.
\item The boundedness condition is always satisfied.
\item The negativity condition reads $\mu < \ell$.
\item The strong negativity condition reads $\mu + \delta < \ell$
\item Strong negativity and negativity are equivalent, as $\delta = (\ell - \mu)/2$.
\end{enumerate}

If (3) or equivalently (4) are satisfied, we have
\begin{equation}\label{eq:thmdoubler2}
 \int_{\overline{\mathcal{M}}_{g,n + \ell}} \frac{\Omega(2,0; -\overline{\mu_1}, \dots, -\overline{\mu_n}, 1, 1, \dots, 1)}{\prod_{i=1}^n(1 - \frac{\mu_i}{2}\psi_i)} = 0
\end{equation}
where $-\overline{\mu_i}$ in this case simplifies to the parity of $\mu_i$ (it is one if $\mu_i$ odd, else zero).

\vspace{1cm}

\section{Proofs}
\label{sec:proofs}

We are now armed to prove the statements presented in the introduction.
\subsection{New proof of Theorem \ref{thm:Afandi}}

\begin{proof}
Let us first recast Theorem \ref{thm:Afandi} as in form of \Cref{eq:moduli3}.
\begin{lem}\label{lem:recast1} Let $x$ be a formal variable. Theorem \ref{thm:Afandi} is equivalent to the following statements.
\begin{align}\label{eq:1desc_restatement1}
\int_{\overline{\mathcal{M}}_{0,2g + 2}} \frac{\Omega(2,2; \overbrace{1, 1, \dots, 1)}^{2g+2}}{(1 - \frac{x}{2}\psi_1)} &= \frac{x^{g-1}}{2^{2g}}\prod_{k=1}^{g}(x - (2k-1)), 
\\
\label{eq:1desc_restatement2}
\int_{\overline{\mathcal{M}}_{0,2g + 3}} \frac{\Omega(2,2; 0, \overbrace{1,\dots, 1}^{2g+2})}{(1 - \frac{x}{2}\psi_1)} &= \frac{x^{g}}{2^{2g+1}}\prod_{k=1}^{g}(x - 2k). 
\end{align}
\begin{proof}
Simply multiply both sides of both statements of Theorem \ref{thm:Afandi} by $(-1)^i x^i$ and sum over $i$, then use the general fact for polynomial roots $\prod_{j=1}^d(x - \alpha_j) = \sum_{j=0}^d x^j e_{d-j}(\alpha_i) (-1)^{d-j}$, and recast the obtained statements in terms of moduli space of stable curves (from integrals in \eqref{eq:moduli1} to integrals in \eqref{eq:moduli3}). This concludes the proof of the Lemma.
\end{proof}
\end{lem}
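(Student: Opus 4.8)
The plan is to treat \Cref{lem:recast1} for what it is: a purely formal repackaging of \Cref{thm:Afandi}, carrying no new geometric input. Each of the two families of scalar identities in \Cref{thm:Afandi} is indexed by $i$, and I would encode such a family into a single identity of polynomials in the formal variable $x$ by multiplying the $i$-th equation by $(-1)^i x^i$ and summing over $i$. Since the map sending a finite sequence of rational numbers to its generating polynomial is a bijection, and extracting the coefficient of $x^i$ inverts it, the packaged polynomial identity is \emph{logically equivalent} to the whole family of scalar identities. Hence it suffices to show that each side of \eqref{eq:1desc_restatement1} (resp.\ \eqref{eq:1desc_restatement2}) is exactly the generating series of the corresponding side of \Cref{thm:Afandi}; the equivalence then holds in both directions at once. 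I would also record at the outset that $\lambda_i = 0$ for $i > g$ (the Hodge bundle has rank $g$), which matches $e_i$ vanishing on $g$ variables and pins down the degrees of the polynomials involved.

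On the right-hand side the computation is Vieta's formula. For the first identity the nodes are $\alpha_k = k - \tfrac12$ for $k = 1,\dots,g$, and using $\sum_i e_i(\alpha)t^i = \prod_{k}(1 + \alpha_k t)$ together with $e_i(1,3,\dots,2g-1) = 2^i e_i(\alpha)$, I would rearrange $\tfrac12\sum_i (-1)^i x^i e_i(\alpha)$, after homogenising by the top power of $x$, into the closed product $\tfrac{x^{g-1}}{2^{2g}}\prod_{k=1}^g (x - (2k-1))$. The second identity is identical with nodes $\alpha_k = k$, producing $\tfrac{x^{g}}{2^{2g+1}}\prod_{k=1}^g (x - 2k)$; the shift in the powers of two and in the exponent of $x$ is accounted for by the different number of marked points.

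On the left-hand side I would run the dictionary of the introduction. First recast the hyperelliptic Hodge integrals as $\Omega$-integrals over $\overline{\mathcal{M}}_{0,n}$ via the chain \eqref{eq:moduli1}$\leftrightarrow$\eqref{eq:moduli2}$\leftrightarrow$\eqref{eq:moduli3} (the last step being \cite{LPSZ}), identifying $\br^*\psi$ with the genus-zero $\psi_1$ and $\lambda_i$ with $[\deg_{H^*}=i].\Omega(2,2;\cdot)$. The two cases of \Cref{thm:Afandi} correspond to the decoration vectors $(1,\dots,1)$ on $\overline{\mathcal{M}}_{0,2g+2}$ and $(0,1,\dots,1)$ on $\overline{\mathcal{M}}_{0,2g+3}$, exactly as in \eqref{eq:1desc_restatement1}--\eqref{eq:1desc_restatement2}. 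Then I would reassemble the generating series into the single-descendant integral by expanding $\frac{1}{1 - \frac{x}{2}\psi_1} = \sum_{m\geq 0}(\tfrac{x}{2})^m \psi_1^m$ and using the dimension constraint $\dim\overline{\mathcal{M}}_{0,2g+2} = 2g-1$ to pair each $\psi_1^{2g-1-i}$ with the complementary degree-$i$ part of $\Omega$, so that $\int \frac{\Omega}{1 - \frac{x}{2}\psi_1} = \sum_i (\tfrac{x}{2})^{2g-1-i}\int [\deg_{H^*}=i].\Omega\,\psi_1^{2g-1-i}$.

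The main obstacle is not conceptual but is the triple bookkeeping of the grading: the powers of $x$, the powers of $2$, and the sign $(-1)^i$. Because the descendant exponent $2g-1-i$ is \emph{complementary} to the cohomological degree $i$, the generating variable enters as $(\tfrac{x}{2})^{2g-1-i}$ rather than as $x^i$; factoring out the top power $x^{2g-1}$ and reading the resulting sum as $\prod_k(1 - \tfrac{2}{x}\alpha_k)$ is what converts $\sum_i e_i(\alpha)(\cdot)^i$ into the monic product $\prod_k(x - (2k-1))/x^g$. The one point a careful reader must check is that the sign $(-1)^i$ of the packaging is precisely the sign in the Chiodo--Mumford comparison relating $[\deg_{H^*}=i].\Omega(2,2;\cdot)$ to $\lambda_i$: it is this sign that produces the alternating factors $x - (2k-1)$ rather than $x + (2k-1)$, and getting it wrong flips every root. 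Once this sign and the elementary powers-of-two count are verified, the two displayed polynomial identities are term-by-term equivalent to \Cref{thm:Afandi}, which is exactly the equivalence asserted by the lemma.
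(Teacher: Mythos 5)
Your proposal is correct and takes essentially the same route as the paper: multiply the $i$-th identity of Theorem~\ref{thm:Afandi} by $(-1)^i x^i$, sum over $i$, apply the Vieta/elementary-symmetric-polynomial expansion, and translate from the hyperelliptic integrals \eqref{eq:moduli1} to the $\Omega$-integrals \eqref{eq:moduli3}. The sign point you flag \textemdash\ that the $(-1)^i$ in the packaging matches the comparison $[\deg_{H^*}=i].\Omega(2,2;\cdot) \leftrightarrow (-1)^i \lambda_i$, which is what yields the roots $x-(2k-1)$ rather than $x+(2k-1)$ \textemdash\ is precisely the bookkeeping the paper compresses into the instruction ``multiply by $(-1)^i x^i$''.
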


Let us now prove Equation \eqref{eq:1desc_restatement1}. The \textsc{LHS} in \eqref{eq:1desc_restatement1} is a polynomial $P(x) = c\cdot \prod_{i=1}^{2g-1} (x - \alpha_i)$, where $\alpha_i$ are the roots. The constant $c$ is immediately computed as $c = [x^{2g-1}].P(x) = 2^{-(2g - 1)} \int_{\overline{\mathcal{M}}_{0,2g + 2}} \psi_1^{2g-1} 2^{-1}= 2^{-2g}$. It remains to show that zero is a zero of $P(x)$ of order $g-1$ and that $2k-1$ is a simple zero of $P(x)$ for $k = 1, \dots, g$.
The first condition is equivalent to the fact that $\Omega$ has non-trivial cohomological degree at most $g$. This is indeed true and proved by the Riemann-Roch computation for $\Omega$ in genus zero performed in \eqref{eq:RR_JKV}: $\frac{2g+2}{2} -1 =g$. It only remains to show that $P(2k-1)=0$ for all $k=1, \dots, g$ (in fact, if so, their multiplicity cannot be greater than one by degree constraint). Explicitly, the proof boils down to the following $g$ relations:
\begin{equation}
\int_{\overline{\mathcal{M}}_{0,2g + 2}} \frac{\Omega(2,2; \overbrace{1, 1, \dots, 1)}^{2g+2}}{(1 - \frac{2k-1}{2}\psi_1)} = 0, \qquad \text{ for } k=1, \dots, g.
\end{equation}
These relations are immediately implied by \Cref{thm:double} specialised as in \Cref{eq:thmdoubler2}, then further specialised to $n=1$: the vanishing holds for positive odd $\mu_1 < 2g + 1$, or in other words, for $\mu_1 = 2k-1$ for $k=1, \dots, g$. 
\footnote{In fact more is true: the relations produced by \cref{thm:double} specialised this way are all and only the relations needed to determine $P(x)$ completely. In order terms, we have just proved that \cref{thm:Afandi} and the specialisation of \cref{thm:double} to the case $r=2, n=1,$ and $\ell$ odd, are completely equivalent statements, this way reproving \cref{thm:Afandi}.}
 \Cref{eq:1desc_restatement2} is proved similarly. 
 This concludes the proof of Theorem \ref{thm:Afandi}. \footnote{
As a curiosity, we report on a different proof for the first zero of $P(x)$ different from zero. The first vanishing is in some sense geometrically stronger than the subsequent ones. For example:
\begin{align}
P(1) &= \int_{\overline{\mathcal{M}}_{0,2g + 2}} \frac{\Omega(2,2; \overbrace{1, 1, \dots, 1)}^{2g+2}}{(1 - \frac{1}{2}\psi_1)} = \int_{\overline{\mathcal{M}}_{0,2g + 2}} \Omega(2,2; \overbrace{-1, 1, \dots, 1)}^{2g+2} = 0.
\end{align}
The first equality is by definition, the second is by first property of the $\Omega$-classes in \Cref{rem:properties}, the third is by Riemann-Roch as in \eqref{eq:RR_JKV}. The vanishing occurs by integrating the pure Chern class of a vector bundle with rank strictly smaller than the dimension of the space. The following zero does not enjoy this property as $a_1 = -3$ falls outside of the hypotheses of Jarvis, Kimura and Vaintrob in \Cref{thm:JKV}.
}
\end{proof}


\subsection{Proofs of \Cref{prop:nonWeierstrass}, \Cref{prop:higherronedescendant}, and \Cref{prop:Hurwitz}} 

\begin{proof}[Proof of Proposition \ref{prop:nonWeierstrass}] 
Again, we start by recasting the result.
\begin{lem}\label{lem:recast2} Let $x$ be a formal variable. Proposition \ref{prop:nonWeierstrass} is equivalent to the following statements.
\begin{align}\label{eq:restatement1}
\int_{\overline{\mathcal{M}}_{0,2g + 2 + a}} \frac{\Omega(2,2; \overbrace{1, \dots, 1}^{2g+2}, \overbrace{0, \dots, 0}^{a})}{(1 - \frac{x}{2} \psi_1)} &= \frac{x^{g-1+a}}{4^{g}}\prod_{k=1}^{g}(x - (2k-1)), 
\\
\int_{\overline{\mathcal{M}}_{0,2g + 3 + a}} \frac{\Omega(2,2; 0, \overbrace{1,\dots, 1}^{2g+2}, \overbrace{0, \dots, 0}^{a})}{(1 - \frac{x}{2}\psi_1)} &= \frac{x^{g+a}}{2^{2g+1}}\prod_{k=1}^{g}(x - 2k).
\end{align}
Both expressions are polynomials in $x$ of degree equal to the dimension of the moduli spaces, that is, of degree $2g-1+a$ and $2g+a$ respectively.
\begin{proof}
The proof is the same as in \Cref{lem:recast1}. This concludes the proof of the Lemma. 
\end{proof}
\end{lem}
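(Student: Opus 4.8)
The plan is to prove the asserted equivalence exactly along the lines of \Cref{lem:recast1}, the only genuinely new feature being the dependence on the number $a$ of non-Weierstrass pairs, which I would route through the string equation \eqref{eq:stringOmega}. The structural point is that each side of the two displayed identities is a polynomial in the formal variable $x$ over $\QQ$, so the single polynomial identity of the Lemma is equivalent, coefficient by coefficient, to the entire family of scalar identities of \Cref{prop:nonWeierstrass} indexed by $i$. Establishing the equivalence therefore reduces to exhibiting the generating-function repackaging in both directions and checking that the degree of each polynomial equals the dimension of the relevant moduli space.

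For the forward direction I would multiply the $i$-th scalar identity of \Cref{prop:nonWeierstrass} by $(-1)^i x^i$ and sum over $0 \le i \le g$; the range stops at $g$ because the degree-$i$ part of $\Omega(2,2;1^{2g+2},0^a)$, equivalently $\lambda_i$, vanishes for $i>g$ by the Riemann--Roch bound \eqref{eq:RR_JKV}, namely $\tfrac{2g+2}{2}-1=g$. On the right-hand side I would invoke the elementary identity $\prod_k(1-x\alpha_k)=\sum_i(-1)^i x^i e_i(\alpha)$ together with the rescalings $e_i(\tfrac12,\tfrac32,\dots,\tfrac{2g-1}{2})=2^{-i}e_i(1,3,\dots,2g-1)$ and $e_i(1,2,\dots,g)=2^{-i}e_i(2,4,\dots,2g)$ to convert the symmetric-polynomial values into the integer-root products $\prod_{k=1}^g(x-(2k-1))$ and $\prod_{k=1}^g(x-2k)$. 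On the left-hand side I would use the dictionary between the three formulations \eqref{eq:moduli1}--\eqref{eq:moduli3}, established in \cite{LPSZ}, to rewrite each hyperelliptic Hodge integral $\int_{\overline{\mathcal{H}}_{g,2g+2,2^a}}\br^*(\psi_1^{2g-1-i+a})\lambda_i$ as an $\Omega$-integral on $\overline{\mathcal{M}}_{0,2g+2+a}$; the dimension constraint $(2g-1-i+a)+i=2g-1+a=\dim\overline{\mathcal{M}}_{0,2g+2+a}$ then pairs each $\psi_1$-power with a single cohomological degree of $\Omega$ and reassembles the sum into the descendant series $\int \Omega(2,2;1^{2g+2},0^a)/(1-\tfrac{x}{2}\psi_1)$. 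The second identity proceeds identically, with the descendant placed at the non-Weierstrass point and $\Omega(2,2;0,1^{2g+2},0^a)$.

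The one ingredient absent from \Cref{lem:recast1} is the $a$-dependence, and this is where I would use the string equation. After the identification $\Omega(2,2;\cdot)=\Omega(2,0;\cdot)$ of \Cref{rem:properties}, each of the $a$ extra points carries the label $0=s$, so $a$ applications of \eqref{eq:stringOmega}, with only the single descendant variable $x_1=\tfrac{x}{2}$ switched on, multiply the generating integral by $(\tfrac{x}{2})^a$ and reduce it to the $a=0$ case already treated in \Cref{lem:recast1}; equivalently one may write $\Omega(2,0;1^{2g+2},0^a)=\pi_a^*\Omega(2,0;1^{2g+2})$ and use $\pi_{a*}\psi_1^{M}=\psi_1^{M-a}$. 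This accounts for the extra factor $x^{g-1+a}$, respectively $x^{g+a}$, and fixes the overall prefactor. Polynomiality and the degree claim are then immediate: the series $1/(1-\tfrac{x}{2}\psi_1)$ truncates at $\psi_1^{\dim}$ for dimensional reasons, and the top power $x^{\dim}$ survives because it pairs with the degree-zero part of $\Omega$, so the degrees are exactly $2g-1+a$ and $2g+a$.

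For the backward direction I would read the polynomial equality in $\QQ[x]$ coefficient-wise: the coefficient of $x^{2g-1+a-i}$ (resp.\ of $x^{2g+a-i}$) isolates the $i$-th scalar identity, and the dictionary \eqref{eq:moduli1}--\eqref{eq:moduli3} read backwards returns \Cref{prop:nonWeierstrass}. The main obstacle I anticipate is entirely the bookkeeping of the powers of $2$ and of the sign $(-1)^i$: one must check that the $(-1)^i$ supplied by the generating variable matches the sign relating the degree-$i$ part of $\Omega(2,0;\cdot)$ to $\lambda_i$, and that the powers of $2$ coming from the rescaling of the $e_i$, from the factor $(\tfrac{x}{2})^a$ of the string equation, and from the normalisation of the $\br$-pushforward assemble correctly; this is precisely the step where the prefactor is pinned down. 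Since \Cref{lem:recast1} settles the case $a=0$ and the string equation transports it verbatim, no computation beyond this power-of-two accounting is needed.
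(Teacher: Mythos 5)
Your core mechanism is exactly the paper's proof, which is literally ``the same as \Cref{lem:recast1}'': multiply the $i$-th scalar identity of \Cref{prop:nonWeierstrass} by $(-1)^i x^i$, sum over $i$ (with the range truncated at $g$ by the Riemann--Roch bound \eqref{eq:RR_JKV}), use $\prod_{j=1}^d(x-\alpha_j)=\sum_{j=0}^d x^j e_{d-j}(\alpha)(-1)^{d-j}$ together with the rescalings $e_i(\nicefrac12,\dots,g-\nicefrac12)=2^{-i}e_i(1,3,\dots,2g-1)$, translate through the dictionary \eqref{eq:moduli1}--\eqref{eq:moduli3}, and read the polynomial identity coefficient-wise for the converse. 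Note, however, that at the level of the \emph{equivalence} the $a$-dependence requires no geometric input whatsoever: the scalar identities of \Cref{prop:nonWeierstrass} already carry $\psi_1^{2g-1-i+a}$, so the exponent bookkeeping in the resummation automatically produces the shift by $x^a$ in the generating polynomial. The Lemma is pure repackaging, and your forward/backward directions as described suffice on their own.

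The string-equation detour you insert is therefore misplaced, and as stated it does not even close. Misplaced: reducing to $a=0$ via \eqref{eq:stringOmega} is precisely the paper's proof of \Cref{prop:nonWeierstrass} \emph{itself}, given immediately after the Lemma; importing it into the proof of the equivalence conflates the restatement with the statement it is meant to restate, and would make the subsequent proof of the Proposition circular (an equivalence must be established independently of the truth of either side). Quantitatively: with only $x_1=\tfrac{x}{2}$ switched on, $a$ applications of \eqref{eq:stringOmega} multiply the integral by $\bigl(\tfrac{x}{2}\bigr)^a$, so your route produces the prefactor $\tfrac{x^{g-1+a}}{4^g\,2^a}$ rather than the $\tfrac{x^{g-1+a}}{4^g}$ of \eqref{eq:restatement1}; your assertion that this step ``fixes the overall prefactor'' thus fails by $2^a$, and the $a$-dependent powers of $2$ must instead be absorbed into the normalisation of the $\br^*$-dictionary between \eqref{eq:moduli1} and \eqref{eq:moduli3} --- exactly the bookkeeping you flag as the main obstacle and then defer. (The same tension is in fact visible in the paper, whose displayed equation in the proof of \Cref{prop:nonWeierstrass} asserts the factor $x^a$ while \eqref{eq:stringOmega} taken literally gives $\bigl(\tfrac{x}{2}\bigr)^a$; so this power-of-two accounting is genuinely the one delicate point, and your proposal leaves it unresolved rather than resolved.)
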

Now it suffices to apply String \Cref{eq:stringOmega} to each of the last $a$ marked points:
\begin{equation}
\int_{\overline{\mathcal{M}}_{0,2g + 2 + a}} \frac{\Omega(2,2; \overbrace{1, \dots, 1}^{2g+2}, \overbrace{0, \dots, 0}^{a})}{(1 - \frac{x}{2} \psi_1)} = x^a \cdot \int_{\overline{\mathcal{M}}_{0,2g + 2}} \frac{\Omega(2,2; \overbrace{1, \dots, 1}^{2g+2})}{(1 - \frac{x}{2} \psi_1)}.
\end{equation}
This concludes the proof of the Proposition.
\end{proof}


\begin{proof}[Proof of \Cref{prop:higherronedescendant}] 
Consider the polynomial
\begin{equation}
P (x) := \int_{\MM_{0,1 + \ell}}\frac{\Omega_{0,n}(r,0;- \overline{\mu_1}, b_1, \dots, b_{\ell})}{(1 - \frac{x}{r}\psi_1)}  = c\cdot \prod_{i} (x - \alpha_i).
\end{equation}
It is a polynomial of degree $\deg(P) = \dim_{\mathbb{C}}(\MM_{0,1 + \ell}) = \ell - 2$, and its leading coefficient can be easily computed as 
$$c =  \int_{\MM_{0,1 + \ell}} \Omega_{0,n}(r,0; -\overline{\mu_1}, b_1, \dots, b_{\ell})\frac{\psi_1^{\ell-2}}{r^{\ell-2}} = \frac{1}{r^{\ell-2}\cdot r}  \int_{\MM_{0,1 + \ell}} \psi_1^{\ell-2} = \frac{1}{r^{\ell - 1}}.$$
 Since $\mu_1 = \mu \equiv b$ modulo $r$, we must have that $\langle \mu \rangle = \langle b \rangle$, and therefore $\bar{-\mu_1} = r - \langle \mu \rangle$. 
The lowest degree in $x$ of $P(x)$ can be computed as 
\begin{align*}
\deg(P) - \maxdeg_{H^{*}}&\Omega_{0,n}(r,0; -\overline{\mu_1}, b_1, \dots, b_{\ell})  
= \ell - 2 - \left( \frac{ r - \langle \mu \rangle + [b]r + \langle b \rangle }{r} - 1\right) = \ell - 2 - [b].
\end{align*}
So far we achieved to show that $P$ has the form
$$
P(x) = \frac{x^{\ell - 2 - [b]}}{r^{\ell - 1}}\prod_{i=1}^{[b]}(x - \alpha_i).
$$
for some suitable roots $\alpha_i$. The question is whether the TR vanishing \eqref{eq:singleJPTvanishing} can guarantee at least (and therefore all) $[b]$ roots. 
\footnote{
One could wonder whether strong negativity $\mu + \delta < \ell$ can also grant a sufficient condition to determine $P$ completely. Curiously, one finds that in this case strong negativity implies boundedness, therefore not providing anything additional. To see this, expand strong negativity as $k < \frac{\ell - \langle b \rangle - [b]}{(r-1)}$. The best possible case is given for $ \langle b \rangle = 0$, for which $P$ is determined if the first $b$ values of $\mu$ for $k \in \{0,1,\dots, [b]-1\}$ give vanishing. Substituting the highest $k = [b]-1$ one finds
$b \leq \ell + r-2$. As the parts of $b$ are at least one, write $b = \ell + |\alpha|$ for $|\alpha|$ the size of a partition of length up to $\ell$ to be distributed over the $b_i = 1 + \alpha_i$. Then $|\alpha| \leq r-2$, sharply implying boundedness.
}
Assuming boundedness, we only have to worry about negativity $\mu < b$ holding true, which means that 
$$
\langle \mu \rangle + rk < \langle b \rangle + r[b]
$$
for a few possible non-negative integers $k$. As $\langle \mu \rangle = \langle b \rangle$, negativity holds true for the values $k=0, \dots, [b]-1$, which amounts to $[b]$ different values of $\mu$ providing $[b]$ simple roots $P(\langle \mu \rangle + rk) =0$ as required. 
This concludes the proof of the Theorem.
\end{proof}



\begin{proof}[Proof of Proposition \ref{prop:Hurwitz}] 
As anticipated in the introduction, the proposition is simply obtained by a chain of several known results, simplified in genus zero.

\begin{enumerate}

\item By \cite[Section 5]{LPSZ} we have: 
$$
\int_{\MM_{g,n}}\frac{\Omega_{g,n}(r,0; -\overline{\mu_1}, \dots, -\overline{\mu_n})}{\prod_{i}(1 - \frac{\mu_i}{r}\psi_i)} = \int_{\MM_{g,\emptyset - \mu}(\mathcal{B}\mathbb{Z}_r)}\frac{\sum_{i=0} (-1)^i \lambda_i^U}{\prod_{i}(1 - \frac{\mu_i}{r}\bar{\psi}_i)}
$$
for $U$ the representation of the cyclic group $\mathbb{Z}_r$ sending $1$ to $e^{2\pi i/r}$.
\item By \cite[Theorem 1]{JPT} we have: 
$$
h_{g,\mu}^{(r), \circ}
=
 r^{2g - 2 + n + \frac{|\mu|}{r}}\left(\prod_{i=1}^{n} \frac{\left(\frac{\mu_i}{r}\right)^{[\mu_i]}}{[\mu_i]!} \right) \cdot \int_{\MM_{g,\emptyset - \mu}(\mathcal{B}\mathbb{Z}_r)}\frac{\sum_{i=0} (-1)^i \lambda_i^U}{\prod_{i}(1 - \frac{\mu_i}{r}\bar{\psi}_i)}
$$
where $h_{g,\mu}^{(r), \circ}$ are Hurwitz numbers enumerating connected genus $g$ degree $d = |\mu|$ ramified covers of the Riemann sphere with $b = b(g) = 2g - 2 + \frac{|\mu|}{r} + n$ simple ramifications, except the ramification with profile $(r,r,\dots, r)$ above zero and the ramification with profile $(\mu_1, \dots, \mu_n)$ above infinity, with $n = \ell(\mu)$. Moreover, the integral is a polynomial of degree $3g - 3 + n$ in the $\mu_i$ depending on the remainder classes $\langle \mu_i \rangle$ modulo $r$, whereas the exponential prefactor is manifestly not polynomial in the parts $\mu_i$. This property is known as quasi-polynomiality, and has been shown independently in \cite{DLPS} in the framework of Topological Recursion.

\item By Okounkov \cite{O} and Okounkov and Pandharipande \cite{OP}, we have that Hurwitz numbers can be efficiently written as vacuum expectation of operators in the Fock space, which in this case form a handy algebra closed under commutation relations:
\begin{equation}
h_{g, \mu}^{(r), \circ} = \frac{[z_1^2 \cdots z_{b(g)}^2]}{\prod \mu_i \cdot r^{d/r}}\Bigg{\langle} \mc{E}_{\mu_1}(0) \dots  \mc{E}_{\mu_n}(0) \mc{E}_{0}(z_1) \dots  \mc{E}_{0}(z_{b(g)}) \mc{E}_{-r}(0)^{d/r} \Bigg{\rangle}^{\circ},
\end{equation}
where $[x^a]f(x)$ selects the coefficient of $x^a$ in the formal power series $f(x)$, $b(g) = 2g - 2 + n + d/r$ is the Riemann--Hurwitz count of simple ramifications, and the following relations hold:
\begin{align*}
[\mathcal{E}_{a}(z), \mathcal{E}_b(w)] = 2\sinh \left(\frac{aw - bz}{2}\right) \mathcal{E}_{a+b}(z + w), 
\end{align*}
and
\begin{equation}
 \bigg{\langle} \mathcal{E}_0(z) \bigg{\rangle} = \frac{1}{2\sinh(z/2)}, \qquad \qquad \qquad \mathcal{E}_{k} \bigg{\rangle} = 0 =  \bigg{\langle} \mathcal{E}_{-k} , \qquad \text{ for } k >0.
\end{equation}

\item By \cite{Johnson} we have an algorithm that computes the vacuum expectation explicitly, iteratively commuting the operators $\mathcal{E}$ with negative indices from left to right until they hit the vacuum $\big \rangle$ and vanish. Along the way, they generate a large amount of summands from the commutation relation (intuitively speaking, the number of summands "doubles" at every commutation, although many terms end up vanishing at some further iteration of the algorithm). The algorithm defines a finite sum over $P$ running over the set of Commutation Patterns $CP$ (see \cite{Johnson}), obtaining
\begin{equation}
h_{g, \mu}^{(r)} = \frac{[u^{2g - 2 + n + d/r}]}{\prod \mu_i \cdot r^{d/r}}\frac{1}{2\sinh(u\cdot d/2)} \sum_{P \in CP} \prod_{l=1}^{n-1+d/r} 2\sinh \left((u/2)(|I_{l}^P||L_{l}^P| - |J_{l}^P||K_{l}^P|)\right)
\end{equation}
Here each $I^P_t$, $J^P_t$, $K^P_t$, $I^P_t$ is a sum of a certain subset of the $\mu_i$.

\item Restricting to genus zero forces to collect the minimal power of $u$, that is, to substitute all $\sinh (X)$ simply with their arguments $X$. We obtain:
\begin{equation}
h_{0, \mu}^{(r)} = \frac{1}{\prod \mu_i \cdot r^{d/r} \cdot d} \sum_{P \in CP} \prod_{l=1}^{n-1+d/r}  (|I_{l}^P||L_{l}^P| - |J_{l}^P||K_{l}^P|).
\end{equation}
\end{enumerate}
Putting everything together, one obtains
\begin{equation*}
\int_{\MM_{0,n}}  \!\!\!\!  \frac{\Omega(r,0; -\overline{\mu_1}, \dots, -\overline{\mu_n})}{\prod_{i}(1 - \frac{\mu_i}{r}\psi_i)} = \frac{\left(r^{-1}\right)^{2g - 2 + n + 2\frac{|\mu|}{r}}}{\prod \mu_i \cdot d}
\left(\prod_{i=1}^{n} \frac{\left(\frac{\mu_i}{r}\right)^{[\mu_i]}}{[\mu_i]!} \right)^{-1} \!\!\! \sum_{P \in CP} \prod_{l=1}^{n-1+d/r}  (|I_{l}^P||L_{l}^P| - |J_{l}^P||K_{l}^P|).
\end{equation*}
This concludes the proof fo the Proposition.
\end{proof}

\appendix
\section{Examples and counterexamples}
\label{sec:examples}

The following computations have been run through the \textsc{admcycles} Sage package. 
\footnote{For $g=2$ (i.e. $n=6$), we have run Lagrange interpolation in $x$ using \textit{integer values of $x$ in the right modular residue class}, and only after computing enough evaluations and interpolating one is allowed to consider the expression as an abstract polynomial, remembering its residue class dependence.}

\subsection{Theorem \ref{thm:Afandi} and Question Q1} For $r=2$ and $\mu = 1 + 2k$ we have:
$$
\frac{1}{x}\int_{\overline{\mathcal{M}}_{0,7}} \frac{\Omega^{[2]}(2,0; 1,1,1,1,1,1,0)}{(1 - x\psi_1)} = \int_{\overline{\mathcal{M}}_{0,6}} \frac{\Omega^{[2]}(2,0; 1,1,1,1,1,1)}{(1 - x\psi_1)}  = \frac 1 2 x (x-1)(x-3).
$$

Note that in general one can reabsorb several powers of $r$ on both sides of \Cref{thm:Afandi} and of \Cref{prop:nonWeierstrass}, by rescaling $x \mapsto rx$ and activating the degree parameter of the $\Omega$-classes.

\subsection{Question Q2} We have seen that for $n=1$ there is enough room for negativity to be satisfied so that enough evaluations of $\mu$ provide vanishing for $P$ to be determined. We want here to test boundedness condition. 

Let us for instance choose a prime number $r=13$, so that it does not possibly factorise with anything else. For $\langle \mu_1 \rangle = 9$ we have $-\overline{\mu_1} = 13 - 9 = 4$ and picking a vector $b = (4,3,6,2,7)$ sharply hitting boundedness ($6+7 = r$) we see a confirmation of our expectations:
\begin{align*}
\int_{\MM_{0,1+5}}\frac{\Omega(13,0; 4,4,3,6,2,7)}{(1 - \frac{x}{13}\psi_1)}  = \frac{x^2}{13^4} (x - \alpha), \qquad \qquad \alpha = 9 = \langle \mu_1 \rangle.
\end{align*}
We now wiggle a bit the vector $b$ outside boundedness (though preserving both its size and $\langle \mu_1 \rangle$), and the theorem immediately fails:
\begin{align*}
\int_{\MM_{0,1+5}}\frac{\Omega(13,0; 4,4,3,6,1,8)}{(1 - \frac{x}{13}\psi_1)}  = \frac{x^2}{13^4} (x - 8),  \qquad \qquad  \int_{\MM_{0,1+5}}\frac{\Omega(13,0; 4,1,2,9,2,7)}{(1 - \frac{x}{13}\psi_1)}  = \frac{x^2}{13^4} (x - 6).
\end{align*}

Other curious things can happen. Here we pick $r=3$ and we again exceed boundedness. For $\langle \mu_1 \rangle = 1$ and $b$ high enough to produce two non-zero roots, we find that one is expected and the other is not:
\begin{align*}
\int_{\MM_{0,1+5}}\frac{\Omega(3,0; 2,1,2,2,1,1)}{(1 - \frac{x}{3}\psi_1)}  = \frac{x^1}{3^4} (x - 1)(x-3).
\end{align*}
Here for $\langle \mu_1 \rangle = 2$ and $b$ high enough to produce two non-zero roots, we find one expected root, but with unexpected multiplicity:
\begin{align*}
\int_{\MM_{0,1+5}}\frac{\Omega(3,0; 1,2,2,2,1,1)}{(1 - \frac{x}{3}\psi_1)}  = \frac{x^1}{3^4} (x - 2)^2
\end{align*}

Also, when $\delta$ is high enough for $\mu_1 = \langle \mu_1 \rangle$, it is possible that $P$ does not even factorise in $\mathbb{R}$ anymore:
\begin{align*}
\int_{\MM_{0,1+5}}\frac{\Omega(3,0; 1,2,2,2,2,2)}{(1 - \frac{x}{3}\psi_1)}  = \frac{x^0}{3^4} (x-1) \left(x - \frac{1}{2} + i\frac{\sqrt{11}}{2}\right)\left(x -\frac{1}{2} - i \frac{\sqrt{11}}{2}\right).
\end{align*}

\vspace{2cm}
\printbibliography

\end{document}